\documentclass[11pt,reqno]{amsart}
\usepackage{amsmath,amssymb,extarrows}
\usepackage{url}
\usepackage{tikz,enumerate}
\usepackage{diagbox}
\usepackage{appendix}
\usepackage{epic}
\vfuzz2pt 

\usepackage{cite}

\usepackage{hyperref}
\usepackage{array}

\usepackage{booktabs}

\setlength{\oddsidemargin}{0.2in}
\setlength{\evensidemargin}{0.2in}
\setlength{\textwidth}{5.8in}
\allowdisplaybreaks[4]

\newtheorem{theorem}{Theorem}

\newtheorem{lemma}[theorem]{Lemma}
\newtheorem{prop}[theorem]{Proposition}
\newtheorem{exam}[theorem]{Example}
\theoremstyle{definition}

\theoremstyle{remark}
\newtheorem{rem}{Remark}
\numberwithin{equation}{section}
\numberwithin{theorem}{section}
\numberwithin{defn}{section}


\begin{document}
\title[Modular Proofs of Gosper's Identities]
 {Modular Proofs of Gosper's Identities}

\author{Liuquan Wang}
\address{School of Mathematics and Statistics, Wuhan University, Wuhan 430072, Hubei, People's Republic of China}
\email{wanglq@whu.edu.cn;mathlqwang@163.com}

\subjclass[2010]{33D15, 11F11, 11F03}

\keywords{Gosper's identities; theta functions; eta products; hauptmoduls}


\begin{abstract}
We give unified modular proofs to all of Gosper's identities on the $q$-constant $\Pi_q$. We also confirm Gosper's observation that for any distinct positive integers $n_1,\cdots,n_m$ with $m\geq 3$, $\Pi_{q^{n_1}}$, $\cdots$, $\Pi_{q^{n_m}}$ satisfy a nonzero homogeneous polynomial. Our proofs provide a method to rediscover Gosper's identities. Meanwhile, several results on $\Pi_q$ found by El Bachraoui have been corrected. Furthermore, we illustrate a strategy to construct some of Gosper's identities using hauptmoduls for genus zero congruence subgroups.
\end{abstract}

\maketitle

\section{Introduction}\label{sec-intro}
Around 2001, Gosper \cite{Gosper} gave $q$-generalizations of the trigonometric functions. He defined
\begin{align}
\sin_q(\pi z)&:=q^{(z-\frac{1}{2})^2}\frac{(q^{2z},q^{2-2z};q^2)_\infty}{(q;q^2)_\infty^2}, \\
\cos_q(\pi z)&:=q^{z^2}\frac{(q^{1+2z},q^{1-2z};q^2)_\infty}{(q;q^2)_\infty^2}.
\end{align}
Here and throughout this paper, we admit the standard $q$-series notation
\begin{align}
(a;q)_\infty:=\prod\limits_{n=0}^\infty (1-aq^n), \quad |q|<1.
\end{align}

Gosper \cite{Gosper} found many interesting formulas involving $q$-trigonometric functions such as
\begin{align}
\sin_q(2z)&=\frac{1}{2}\frac{\Pi_q}{\Pi_{q^4}}\sqrt{(\sin_{q^4}z)^2-(\sin_{q^2}z)^4}, \\
\sin_q(3z)&=\frac{1}{3}\frac{\Pi_q}{\Pi_{q^9}}\sin_{q^9}z-\left(1+\frac{1}{3}\frac{\Pi_q}{\Pi_{q^9}}\right)(\sin_{q^3}z)^3.
\end{align}
Here the $q$-constant
\begin{align}
\Pi_q:=q^{1/4}\frac{(q^2;q^2)_\infty^2}{(q;q^2)_\infty^2}
\end{align}
arises naturally and plays an important role in Gosper's $q$-trigonometry.

The  function $\Pi_q$ is closely related to Ramanujan's theta function
\begin{align}
\psi(q):=\sum_{n=0}^\infty q^{\frac{n(n+1)}{2}}.
\end{align}
From the Jacobi triple product identity, we have
\begin{align}
\Pi_q=q^{1/4}\psi(q)^2.
\end{align}

Gosper \cite{Gosper} discovered many interesting $\Pi_q$-identities, i.e., identities involving $\Pi_q$. In particular, he proved the following beautiful identity:
\begin{itemize}
\item Level 8
\begin{align}
\frac{\Pi_q^2}{\Pi_{q^2}\Pi_{q^4}}-\frac{\Pi_{q^2}^2}{\Pi_{q^4}^2}=4. \tag{L8-1} \label{G-4-1}
\end{align}
\end{itemize}
Here the term ``level'' will be explained soon. Motivated by this identity and with the help of $q$-trigonometry, Gosper searched for similar identities involving $\Pi_{q^n}$. He \cite[p.\ 94]{Gosper} made the following observation:
\begin{quote}
``In fact, it seems that for any three (or more) distinct integers $n$, $\{\Pi_{q^n}\}$ satisfy homogeneous polynomials. One can narrow the search for such relationships by noting that the factor of $q^{\frac{n}{4}}$ in $\Pi_{q^n}$ will require that in each term of the form $\Pi_{q^a}\Pi_{q^b}\Pi_{q^c}\dots$, the sum (mod 4) of $a+b+c+\cdots$ must be the same. One then proposes a polynomial with undetermined coefficients, Taylor expands it, and attempts to solve the linear system resulting from equating powers of $q$. ...... Here are some typical identities so discovered (but not proved!):''
\end{quote}
Gosper's identities present polynomial relations among the $\Pi_{q^n}$'s. For our convenience, for an identity involving $\Pi_{q^{n_1}}, \Pi_{q^{n_2}}, \cdots,\Pi_{q^{n_m}}$ with $n_1<n_2<\cdots<n_m$, we call $N=2\mathrm{lcm}\{n_1,n_2,\cdots,n_m\}$, i.e., twice of the least common multiple of $n_1,n_2,\cdots,n_m$ as the \textit{level} of this identity. We will see later that an identity of level $N$ is essentially an equality of modular forms on $\Gamma_0(N)$. Thus the level $N$ is related to the level of modular forms involved in the identity.

Here are the full list of identities found by Gosper \cite[pp.\ 95,103,104]{Gosper} by computer without proof. We reorder the identities according to their levels.
\begin{itemize}
\item Level 12 \footnote{We use the tag L$N$-$n$ to denote the $n$-th identity of level $N$ in our list.}
\begin{align}
\Pi_{q^2}^2+2\Pi_{q^2}\Pi_{q^6}&=\Pi_q\Pi_{q^3}+3\Pi_{q^6}^2, \tag{L12-1} \label{G-6-1} \\
\frac{\Pi_{q^2}\Pi_{q^3}^2}{\Pi_{q^6}\Pi_{q}^2}&=\frac{\Pi_{q^2}-\Pi_{q^6}}{\Pi_{q^2}+3\Pi_{q^6}}, \tag{L12-2} \label{G-6-2} \\
\sqrt{\Pi_{q^2}\Pi_{q^6}}\left(\Pi_q^2-3\Pi_{q^3}^2 \right)&=\sqrt{\Pi_q\Pi_{q^3}}\left(\Pi_{q^2}^2+3\Pi_{q^6}^2 \right), \tag{L12-3} \label{G-6-3}\\
\Pi_{q^2}\Pi_{q^3}^4&=\Pi_{q^6}\left(\Pi_{q^2}-\Pi_{q^6} \right)^3\left(\Pi_{q^2}+3\Pi_{q^6}\right), \tag{L12-4} \label{G-6-4}\\
\Pi_{q^6}\Pi_q^4&=\Pi_{q^2}\left(\Pi_{q^2}-\Pi_{q^6} \right)\left(\Pi_{q^2}+3\Pi_{q^6} \right)^3, \tag{L12-5} \label{G-6-5}\\
\Pi_q\Pi_{q^3}\left(\Pi_{q}^2\pm 4\Pi_{q^2}^2 \right)^2&=\Pi_{q^2}^2\left(\Pi_q\mp \Pi_{q^3} \right)\left(\Pi_q\pm 3\Pi_{q^3} \right)^3, \tag{L12-6} \label{G-6-6}\\
\Pi_q\Pi_{q^3}\left(\Pi_{q^3}^2\pm 4\Pi_{q^6}^2 \right)^2&=\Pi_{q^6}^2\left(\Pi_q\mp \Pi_{q^3} \right)^3\left(\Pi_q\pm 3\Pi_{q^3} \right), \tag{L12-7} \label{G-6-7} \\
\Pi_{q^2}^2\left(\Pi_q^4+18\Pi_q^2\Pi_{q^3}^2-27\Pi_{q^3}^4 \right)&=\Pi_q\Pi_{q^3}\left(\Pi_q^4+16\Pi_{q^2}^4 \right), \tag{L12-8} \label{G-6-8} \\
\Pi_{q^6}^2\left(\Pi_q^4-6\Pi_q^2\Pi_{q^3}^2-3\Pi_{q^3}^4 \right)&=\Pi_q\Pi_{q^3}\left(\Pi_{q^3}^4+16\Pi_{q^6}^4 \right). \tag{L12-9} \label{G-6-9}
\end{align}
\item Level 18
\begin{align}
\Pi_{q^3}^2+3\Pi_q\Pi_{q^9}=\sqrt{\Pi_q\Pi_{q^9}}\left(\Pi_q+3\Pi_{q^9} \right).  \tag{L18-1} \label{G-9-1}
\end{align}
\item Level 20
\begin{align}
\Pi_{q^2}\Pi_{q^5}^4\left(16\Pi_{q^{10}}^4-\Pi_{q^5}^4 \right)&=\Pi_{q^{10}}^3\left(5\Pi_{q^{10}}-\Pi_{q^2} \right) \left(\Pi_{q^2}-\Pi_{q^{10}} \right)^5, \tag{L20-1} \label{G-10-1} \\
\Pi_{q^{10}}\Pi_q^4\left(16\Pi_{q^2}^4-\Pi_q^4 \right)&=\Pi_{q^2}^3\left(5\Pi_{q^{10}}-\Pi_{q^2} \right)^5\left(\Pi_{q^2}-\Pi_{q^{10}} \right),  \tag{L20-2}\label{G-10-2} \\
\Pi_q\Pi_{q^5}\left(16\Pi_{q^2}^4-\Pi_q^4 \right)^2&=\Pi_{q^2}^4\left(5\Pi_{q^5}-\Pi_q \right)^5\left(\Pi_{q^5}-\Pi_q \right), \tag{L20-3} \label{G-10-3}\\
\Pi_q\Pi_{q^5}\left(16\Pi_{q^{10}}^4-\Pi_{q^5}^4 \right)^2&=\Pi_{q^{10}}^4\left(5\Pi_{q^5}-\Pi_q \right)\left(\Pi_{q^5}-\Pi_q \right)^5,  \tag{L20-4} \label{G-10-4}\\
\Pi_{q^2}\Pi_{q^{10}}\left(\Pi_{q^5}-\Pi_q \right)\left(5\Pi_{q^5}-\Pi_q \right)&=\left(\Pi_q\Pi_{q^{10}}-\Pi_{q^2}\Pi_{q^5} \right)^2. \tag{L20-5} \label{G-10-5}
\end{align}
\end{itemize}

Gosper's identities have been investigated by a number of researchers. In 2019, He \cite{He2019} gave proofs of all the level 12 and level 20 identities of Gosper stated above except for \eqref{G-6-1}. His proofs rely on using some modular equations of degrees 3 and 5. He and Zhai \cite{He-Zhai} confirmed \eqref{G-9-1}.

Earlier than He's work \cite{He2019}, El Bachraoui \cite{Bachraoui} proved that the square of both sides of \eqref{G-6-2} are equal. He did not realize that this fact actually proves \eqref{G-6-2} with few more arguments (see Lemma \ref{lem-square} and Remark \ref{rem-square} in this paper). El Bachraoui also proved the equivalence of \eqref{G-6-4} and \eqref{G-6-5}. Furthermore, El Bachraoui reproved \eqref{G-4-1} (see \cite[Theorem 4.1(a)]{Bachraoui}) and stated some new identities \cite[Theorems 4.1--4.3]{Bachraoui}. However, some of his identities are incorrect or contain typos (see Remark \ref{rem-Bachraoui-incorrect}). Below we state the  correct identities. We also include a level 16 identity
in the work of Abo Touk et al. \cite[Theorem 5(a)]{AboTouk}.
\begin{itemize}
\item Level 12
\begin{align}
(\Pi_{q}^2-\Pi_{q^3}^2)^4\Pi_{q^3}\Pi_{q^2}^3\Pi_{q^6}&=(\Pi_{q^2}^2-\Pi_{q^6}^2)^2\Pi_{q}^3(4\Pi_{q^6}^2+\Pi_q\Pi_{q^3})^3, \tag{L12-10} \label{L6-10} \\
\Pi_{q^3}\Pi_{q^6}^4(\Pi_q^2-\Pi_{q^3}^2)^4&=(\Pi_{q^2}^2-\Pi_{q^6}^2)^2(\Pi_{q^3}^3+4\Pi_q\Pi_{q^6}^2)^3, \tag{L12-11} \label{L6-11} \\
(\Pi_q^2-\Pi_{q^3}^2)(\Pi_{q^2}-\Pi_{q^6})^3\Pi_{q^6}&=\Pi_{q^3}^2(\Pi_{q^2}^2-\Pi_{q^6}^2)^2, \tag{L12-12} \label{L6-12} \\
\Pi_{q^2}\Pi_{q^3}^3&=\Pi_q\Pi_{q^6}\left(\Pi_{q^2}-\Pi_{q^6} \right)^2, \tag{L12-13} \label{L6-13} \\
\Pi_q^3\Pi_{q^6}&=\Pi_{q^2}\Pi_{q^3}\left(\Pi_{q^2}+3\Pi_{q^6} \right)^2. \tag{L12-14} \label{L6-14}
\end{align}
\item Level 16
\begin{align}
\Pi_q^2\Pi_{q^8}=\Pi_{q^2}\Big(\Pi_{q^4}+2\Pi_{q^8}\Big)^2. \tag{L16-1} \label{L8-1}
\end{align}
\item Level 18
\begin{align}
\sqrt{\Pi_{q^9}}\left(\sqrt{\Pi_{q}}-\sqrt{\Pi_{q^9}} \right)^3&=\Pi_{q^3}^2-\Pi_{q^9}^2, \tag{L18-2}\label{L9-2} \\
\Pi_{q^9}^2(\Pi_q^2-\Pi_{q^3}^2)^3&=(\Pi_{q^3}^2-\Pi_{q^9}^2)(\Pi_{q^3}^2+3\Pi_q\Pi_{q^9})^3, \tag{L18-3} \label{L9-3} \\
\Pi_{q^9}\left(\Pi_q^2-\Pi_{q^3}^2 \right)^6&=\Pi_q^3\left(\Pi_{q^3}^2-\Pi_{q^9}^2 \right)^2\left(\Pi_q+3\Pi_{q^9}\right)^6. \tag{L18-4} \label{L9-4}
\end{align}
\end{itemize}
\begin{rem}\label{rem-Bachraoui-incorrect}
(1) The identity in part (a) of \cite[Theorem 4.1]{Bachraoui} is the same with \eqref{G-4-1}. The identities in parts (b,c,d) of
\cite[Theorem 4.1]{Bachraoui} are incorrect. The mistakes are probably caused by incorrect manipulations. The identities \eqref{L6-10}, \eqref{L6-11} and \eqref{L6-12} are the correct identities corresponding to (b),(c),(d), respectively. We recover these identities by checking the proofs given by El Bachraoui in \cite{Bachraoui}.

(2) Identity \eqref{L6-13} is part (a) of
 \cite[Theorem 4.2]{Bachraoui}. Note that part (b) of \cite[Theorem 4.2]{Bachraoui} is incorrect. In fact, if we follow the proof of (b) in \cite{Bachraoui}, we get nothing but a trivial identity.

(3) Identities \eqref{L6-14}, \eqref{L9-2}, \eqref{L9-3} and \eqref{L9-4} are parts (a),(b),(c) and (d) of \cite[Theorem 4.3]{Bachraoui}, respectively. Note that the original identity in part (c) stated in  \cite[Theorem 4.3]{Bachraoui} is incorrect. The correct one should be \eqref{L9-3}. The identity \eqref{L6-14} also appeared in \cite[Theorem 5(b)]{AboTouk}.

(4) El Bachraoui \cite[Theorem 2.1]{Bachraoui} claimed that he found the following identities which involve only two of the $\Pi_{q^n}$:
\begin{align}
&\Pi_{q^9}^2\left(\Pi_{q}^2-\Pi_{q^9}^2 -\sqrt{\Pi_{q^9}}(\sqrt{\Pi_q}-\sqrt{\Pi_{q^9}})^3 \right)^3 \nonumber \\
&= \left(\Pi_{q^9}^2+\sqrt{\Pi_{q^9}}(\sqrt{\Pi_q}-\sqrt{\Pi_{q^9}})^3+3\Pi_q\Pi_{q^9} \right)^3  \sqrt{\Pi_{q^9}}(\sqrt{\Pi_q}-\sqrt{\Pi_{q^9}})^3,  \label{False-L9-5} \\
&\Pi_{q^9}\left(\Pi_q^2-\Pi_{q^9}^2-\sqrt{\Pi_{q^9}}\Big(\sqrt{\Pi_q}-\sqrt{\Pi_{q^9}}\Big)^3\right)^6\nonumber \\
&=\Pi_q^3\Pi_{q^9}\Big(\sqrt{\Pi_q}-\sqrt{\Pi_{q^9}}\Big)^6\Big(\Pi_q+3\Pi_{q^9}\Big)^6.   \label{False-L9-6}
\end{align}
The identities \eqref{False-L9-5} and \eqref{False-L9-6} correspond to parts (a) and (b) of \cite[Theorem 2.1]{Bachraoui}, respectively. Part (a) is deduced from \eqref{L9-2} and \eqref{L9-3} by eliminating $\Pi_{q^3}$. Part (b) is deduced from \eqref{L9-2} and \eqref{L9-4} in a similar way. Note that the original identity for part (a) is incorrect and there is  also a typo for part (b). We have corrected both of them here. It should be noted that the identities \eqref{False-L9-5} and \eqref{False-L9-6} are in fact trivial identities which have nothing to do with $\Pi_q$ or $\Pi_{q^9}$. In fact, if we denote $\sqrt{\Pi_q}$ and $\sqrt{\Pi_{q^9}}$ by $a$ and $b$, respectively, then both sides of \eqref{False-L9-5} (resp.\ \eqref{False-L9-6}) are equal to $a^3b^4(a-b)^3(a^2+3b^2)^3$ (resp.\ $a^6b^2(a-b)^6(a^2+3b^2)^6$).

In fact, given any positive integers $n_1<n_2$, it is impossible to find nontrivial polynomial relations between $\sqrt{\Pi_{q^{n_1}}}$ and $\sqrt{\Pi_{q^{n_2}}}$. This can be proved in a way similar to the proof of the Corollary given in \cite[p.\ 15]{Zagier}.
\end{rem}

Gosper \cite{Gosper} also stated a set of Lambert series identities related to $\Pi_q$. Again, here we present his identities according to the level of the parts involving $\Pi_{q}$.
\begin{itemize}
\item Level 2 \footnote{We use the tag La$N$-$n$ to denote the $n$-th Lambert series identity of level $N$ in our list.}
\begin{align}
\Pi_q^4=6\sum_{n\geq 1}\frac{q^{4n-2}}{(1-q^{2n-1})^4}+\sum_{n\geq 1}\frac{q^{2n-1}}{(1-q^{2n-1})^2}=\sum_{n\geq 1}\frac{n^3q^n}{1-q^{2n}}. \tag{La2-1} \label{LL1-1}
\end{align}
\item Level 4
\begin{align}
\sum_{n\geq 1}\frac{q^n}{(1-q^n)^2}-2\sum_{n\geq 1}\frac{q^{2n}}{(1-q^{2n})^2}&=\frac{1}{24}\left(\frac{\Pi_q^4}{\Pi_{q^2}^2}-1 \right)+\frac{2}{3}\Pi_{q^2}^2, \tag{La4-1} \label{LL2-1} \\
\sum_{n\geq 1} \frac{q^n}{(1-q^n)^2}-4\sum_{n\geq 1}\frac{q^{4n}}{(1-q^{4n})^2}&=\frac{1}{8}\left(\frac{\Pi_q^4}{\Pi_{q^2}^2}-1 \right), \tag{La4-2} \label{LL2-2} \\
\sum_{n\geq 1} \frac{q^{2n-1}}{(1-q^{2n-1})^2}-2\sum_{n\geq 1}\frac{q^{4n-2}}{(1-q^{4n-2})^2}&=\Pi_{q^2}^2=\sum_{n\geq 1}\frac{(2n-1)q^{2n-1}}{1-q^{4n-2}}. \tag{La4-3} \label{LL2-3}
\end{align}
\item Level 6
\begin{align}
\sum_{n\geq 1}\frac{q^n}{(1-q^n)^2}-3\sum_{n\geq 1}\frac{q^{3n}}{(1-q^{3n})^2}&=\frac{\left(\Pi_q^2+3\Pi_{q^3}^2 \right)^2}{12\Pi_q\Pi_{q^3}}-\frac{1}{12}, \tag{La6-1} \label{LL3-1} \\
\sum_{n\geq 1}\frac{q^{2n-1}}{(1-q^{2n-1})^2}-3\sum_{n\geq 1}\frac{q^{6n-3}}{(1-q^{6n-3})^2}&=\Pi_q\Pi_{q^3}. \tag{La6-2} \label{LL3-2}
\end{align}
\item Level 8
\begin{align}
\sum_{n\geq 1}\frac{q^{2n-1}}{(1-q^{2n-1})^2}-4\sum_{n\geq 1}\frac{q^{8n-4}}{(1-q^{8n-4})^2}=\frac{1}{8}\left(\frac{\Pi_q^4}{\Pi_{q^2}^2}-\frac{\Pi_{q^2}^4}{\Pi_{q^4}^2} \right)=\Pi_{q^2}^2+2\Pi_{q^4}^2. \tag{La8-1} \label{LL4-1}
\end{align}
\item Level 10
\begin{align}
&6\Big(\sum_{n\geq 1}\frac{q^n}{(1-q^n)^2}-5\sum_{n\geq 1}\frac{q^{5n}}{(1-q^{5n})^2} \Big)+1 \nonumber \\
&=\left(\frac{\Pi_q}{\Pi_{q^5}}+2+5\frac{\Pi_{q^5}}{\Pi_q} \right)\Big(\sum_{n\geq 1}\frac{q^{2n-1}}{(1-q^{2n-1})^2}-5\sum_{n\geq 1}\frac{q^{10n-5}}{(1-q^{10n-5})^2} \Big), \tag{La10-1} \label{LL5-1} \\
&\frac{\sum_{n\geq 1}\frac{q^{2n-1}}{(1-q^{2n-1})^2}-5\sum_{n\geq 1}\frac{q^{10n-5}}{(1-q^{10n-5})^2}}{\Pi_{q^5}^2}=\sqrt{\frac{\Pi_q^3}{\Pi_{q^5}^3}-2\frac{\Pi_q^2}{\Pi_{q^5}^2}+5\frac{\Pi_q}{\Pi_{q^5}}}. \tag{La10-2} \label{LL5-2}
\end{align}
\item Level 12
\begin{align}
\sum_{n\geq 1}\frac{q^{2n-1}}{(1-q^{2n-1})^2}-6\sum_{n\geq 1}\frac{q^{12n-6}}{(1-q^{12n-6})^2}=\Pi_{q^2}^2+2\Pi_{q^2}\Pi_{q^6}=\Pi_q\Pi_{q^3}+3\Pi_{q^6}^2. \tag{La12-1} \label{LL6-1}
\end{align}
\item Level 18
\begin{align}
&\sum_{n\geq 1}\frac{q^{2n}}{(1-q^{2n})^2}-9\sum_{n\geq 1}\frac{q^{18n}}{(1-q^{18n})^2}=\frac{\Pi_{q^3}^3}{\Pi_q}+\frac{1}{3}\Big(\frac{\Pi_{q^3}^3}{\Pi_{q^9}}-1 \Big), \tag{La18-1} \label{LL9-1} \\
&\sum_{n\geq 1}\frac{q^{2n-1}}{(1-q^{2n-1})^2}-9\sum_{n\geq 1}\frac{q^{18n-9}}{(1-q^{18n-9})^2} \nonumber \\
&=\left(\Pi_q\Pi_{q^9}+3\Pi_{q^9}^2 \right)\sqrt{\left(\frac{\Pi_q}{\Pi_{q^9}} \right)^{\frac{3}{2}}-3\frac{\Pi_q}{\Pi_{q^9}}+3\left(\frac{\Pi_q}{\Pi_{q^9}} \right)^{\frac{1}{2}}}, \tag{La18-2} \label{LL9-2}\\
&3\Big(\sum_{n\geq 1}\frac{q^n}{(1-q^n)^2}-9\sum_{n\geq 1}\frac{q^{9n}}{(1-q^{9n})^2} \Big)+1  \nonumber \\
&=\Big(\sqrt{\frac{\Pi_q}{\Pi_{q^9}}}+3\sqrt{\frac{\Pi_{q^9}}{\Pi_q}} \Big)\Big(\sum_{n\geq 1}\frac{q^{2n-1}}{(1-q^{2n-1})^2}-9\sum_{n\geq 1}\frac{q^{18n-9}}{(1-q^{18n-9})^2} \Big).  \tag{La18-3} \label{LL9-3}
\end{align}
\item Level 20
\begin{align}
&\frac{\sum_{n\geq 1}\frac{q^{2n-1}}{(1-q^{2n-1})^2}-5\sum_{n\geq 1}\frac{q^{10n-5}}{(1-q^{10n-5})^2}}{\Pi_{q^5}^2}=\frac{\frac{\Pi_{q^5}^2}{\Pi_{q^{10}}^2}+16\frac{\Pi_{q^{10}}^2}{\Pi_{q^5}^2} }{\frac{\Pi_q}{\Pi_{q^5}}-4-\frac{\Pi_{q^5}}{\Pi_q}}. \tag{La20-1} \label{LL20-1}
\end{align}
\end{itemize}
\begin{rem}\label{rem-Gosper-wrong}
(1) Gosper stated \eqref{LL5-2} and \eqref{LL20-1} in a single equation. Here we split them according to different levels.

(2) Gosper \cite[p.\ 104]{Gosper} also stated another expression for the series in \eqref{LL9-3}. But his expression obviously contains typos and is incorrect. We do not know what he intended to write here and hence do not include that wrong formula. Nevertheless, we find a new expression for the series in \eqref{LL9-3} and an associated $\Pi_q$-identity:
\begin{itemize}
\item Level 18
\begin{align}
3\Big(\sum_{n\geq 1}\frac{q^n}{(1-q^n)^2}-9\sum_{n\geq 1}\frac{q^{9n}}{(1-q^{9n})^2} \Big)+1  &=3\Pi_q\Pi_{q^3}+9\Pi_{q^3}\Pi_{q^9}+3\frac{\Pi_{q^3}^3}{\Pi_{q}}+\frac{\Pi_{q^3}^3}{\Pi_{q^9}}, \tag{La18-4} \label{La18-4} \\
6\frac{\Pi_q}{\Pi_{q^3}}+3\frac{\Pi_{q^3}}{\Pi_{q}}-\frac{\Pi_q^3}{\Pi_{q^3}^3}&=27\frac{\Pi_{q^9}^3}{\Pi_{q^3}^3}-18\frac{\Pi_{q^9}}{\Pi_{q^3}}-\frac{\Pi_{q^3}}{\Pi_{q^9}}. \tag{L18-5} \label{L18-5}
\end{align}
\end{itemize}
\end{rem}

In \cite{Gosper}, Gosper did not provide any proofs of the above Lambert series identities. El Bachraoui \cite[Theorem 1]{Bachraoui-PAMS} proved the identities \eqref{LL1-1}, \eqref{LL2-1}, \eqref{LL2-2},  \eqref{LL3-1}, \eqref{LL3-2} and \eqref{LL9-1}. His proofs are based on Gosper's $q$-trigonometry, which are perhaps close to the idea behind Gosper's discovery of these identities.
Later He \cite{HeAAM} gave a proof of \eqref{LL9-2} based on the theory of modular forms.

Motivated by Gosper's observation and the above works, there are two goals of this paper. First, we will confirm Gosper's observation, which is actually a standard fact in the theory of modular forms.
\begin{theorem}\label{thm-main}
Gosper's observations is true. That is, for any distinct positive integers $n_1<\cdots<n_m$ with $m\geq 3$, there is a nonzero homogeneous polynomial $f(x_1,\cdots,x_m)\in \mathbb{C}[x_1,\cdots,x_m]$ such that
$$f(\Pi_{q^{n_1}},\cdots,\Pi_{q^{n_m}})=0.$$
\end{theorem}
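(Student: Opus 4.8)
The plan is to exploit the fact that each $\Pi_{q^n}=q^{n/4}\psi(q^n)^2$ is (up to the rational power of $q$) an eta-quotient, hence a modular form on some congruence subgroup. First I would recall that $\psi(q)=\eta(q^2)^2/\eta(q)$, so $\Pi_{q^n}=q^{n/4}\eta(q^{2n})^4/\eta(q^n)^2$; after absorbing the fractional $q$-power one checks that, for a suitable common level $N$ (take $N$ divisible by $4n_m$, e.g. $N=4\,\mathrm{lcm}\{n_1,\dots,n_m\}$ or $2\,\mathrm{lcm}$ as in the paper's convention), each $\Pi_{q^{n_i}}$ is a meromorphic modular form of the same weight $1$ on $\Gamma_0(N)$ with a multiplier system, all $m$ of them sharing the same weight and multiplier. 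Equivalently, each ratio $\Pi_{q^{n_i}}/\Pi_{q^{n_1}}$ is a modular function (weight $0$, trivial multiplier) on $\Gamma_0(N)$, i.e.\ an element of the function field $\mathbb{C}(X_0(N))$ of the modular curve $X_0(N)$.

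Next I would invoke the basic algebraic-geometry fact that $\mathbb{C}(X_0(N))$, being the function field of a curve over $\mathbb{C}$, has transcendence degree $1$ over $\mathbb{C}$. Therefore any two of the functions $u_i:=\Pi_{q^{n_i}}/\Pi_{q^{n_1}}$ (for $i\ge 2$) are algebraically dependent; more to the point, the $m-1\ge 2$ functions $u_2,\dots,u_m$ cannot be algebraically independent, so there is a nonzero polynomial $P\in\mathbb{C}[y_2,\dots,y_m]$ with $P(u_2,\dots,u_m)=0$. (If one wants to be concrete rather than cite transcendence degree: pick two of the $u_i$, say $u_2$ and $u_3$, and bound the pole orders of $1,u_2,u_2^2,\dots$ and $u_3,u_2u_3,\dots$ by Riemann–Roch or just by the total degree of $u_2,u_3$ as maps to $\mathbb{P}^1$; a large enough collection of monomials $u_2^a u_3^b$ lies in a finite-dimensional space $L(D)$ and must be linearly dependent, yielding $P$.)

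Finally I would homogenize. Write $P(y_2,\dots,y_m)=\sum_\alpha c_\alpha \prod_{i\ge 2} y_i^{\alpha_i}$ and let $d=\max_\alpha\sum_i \alpha_i$ be its total degree. Substitute $y_i=x_i/x_1$ and clear denominators: $f(x_1,\dots,x_m):=x_1^{d}\,P(x_2/x_1,\dots,x_m/x_1)$ is a nonzero polynomial, homogeneous of degree $d$ because every monomial $c_\alpha\prod_{i\ge2}x_i^{\alpha_i}x_1^{d-\sum\alpha_i}$ has total degree $d$. Dividing the identity $P(u_2,\dots,u_m)=0$ through and multiplying by $\Pi_{q^{n_1}}^{d}$ gives exactly $f(\Pi_{q^{n_1}},\dots,\Pi_{q^{n_m}})=0$, as required. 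The nonvanishing of $f$ is immediate from that of $P$ (the homogenization map on polynomials is injective on each fixed-degree piece).

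The only real obstacle is the first step: verifying cleanly that, for one explicit level $N$, all the $\Pi_{q^{n_i}}$ are modular forms of a single common weight and multiplier on $\Gamma_0(N)$, so that the ratios are genuine modular functions. This is a routine but slightly fiddly check with the transformation law of $\eta$ under $\Gamma_0(N)$ (Ligozat's criterion for eta-quotients), tracking the fractional $q$-powers $q^{n_i/4}$; once that is in place the rest is the standard transcendence-degree-one argument and a bookkeeping homogenization. A secondary, purely cosmetic point is choosing $N$ to match the paper's "level $=2\,\mathrm{lcm}$" convention, but any common multiple works for the existence statement.
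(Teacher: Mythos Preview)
Your approach is correct in spirit and genuinely different from the paper's. The paper argues by dimension counting: it replaces $q$ by $q^4$ so that every monomial $\Pi_{q^{4n_1}}^{k_1}\cdots\Pi_{q^{4n_m}}^{k_m}$ of total degree $2k$ lands in $M_{2k}(\Gamma_0(8n))$ with trivial character, then observes that the number $\binom{2k+m-1}{m-1}$ of such monomials grows like $k^{m-1}$ while $\dim M_{2k}(\Gamma_0(8n))$ grows linearly in $k$; for $m\ge3$ the monomials eventually outnumber the dimension and a linear relation drops out. Your route via transcendence degree $1$ of $\mathbb{C}(X_0(N))$ (or $\mathbb{C}(X(N))$) is slicker and explains conceptually why $m\ge3$ is exactly the threshold, but it is less constructive: the paper's count yields an explicit (if crude) bound $2k_0$ on the degree of the relation, which feeds directly into their theme of \emph{rediscovering} Gosper's identities by linear algebra.

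One genuine correction to your write-up: the multipliers of the $\Pi_{q^{n_i}}$ on $\Gamma_0(N)$ are \emph{not} all equal in general, so the ``routine check'' you flag would in fact fail as stated. Concretely, $\Pi_{q^{n_i}}/\Pi_{q^{n_1}}=q^{(n_i-n_1)/4}(1+\cdots)$, and under $z\mapsto z+1$ this picks up $e^{2\pi i(n_i-n_1)/4}$, which is nontrivial unless $n_i\equiv n_1\pmod 4$. The fix is immediate and you have several options: (i) replace $q$ by $q^4$ at the outset (this is exactly what the paper does), (ii) work with fourth powers $(\Pi_{q^{n_i}}/\Pi_{q^{n_1}})^4$, which \emph{are} invariant on $\Gamma_0(N)$ and still algebraically dependent by the same transcendence-degree argument, or (iii) pass to $\Gamma(24N)$, where all eta multipliers trivialize. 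Any of these repairs your argument with no change to its structure; after that, the homogenization step is exactly as you wrote.
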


Second, we will give unified modular proofs for all of Gosper's $\Pi_q$-identities. As a byproduct of our method, besides \eqref{La18-4} and \eqref{L18-5}, we also find the following new identity:
\begin{itemize}
\item Level 16
\begin{align}
\Pi_q^4\Pi_{q^4}\Pi_{q^8}\Big(\Pi_{q^4}^2+4\Pi_{q^8}^2\Big)=\Pi_{q^2}^4\Big(\Pi_{q^4}+2\Pi_{q^8} \Big)^4. \tag{L16-2} \label{L8-2}
\end{align}
\end{itemize}

\begin{theorem}\label{thm-id}
All the identities \eqref{G-4-1}--\eqref{L9-4}, \eqref{L8-2} and \eqref{L18-5} are true.
\end{theorem}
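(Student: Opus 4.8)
The plan is to convert each identity into an equality of modular forms on $\Gamma_0(N)$, where $N$ is the level of the identity as defined above, and then to verify that equality by the valence formula. The starting observation is that
\[
\Pi_{q^n}=\frac{\eta(2n\tau)^4}{\eta(n\tau)^2},\qquad q=e^{2\pi i\tau},
\]
which follows directly from the definition $\Pi_q=q^{1/4}\psi(q)^2$, the Gauss product $\psi(q)=(q^2;q^2)_\infty/(q;q^2)_\infty$, and Euler's identity $(q;q)_\infty=(q;q^2)_\infty(q^2;q^2)_\infty$. Under this substitution every monomial $\prod_i\Pi_{q^{n_i}}^{e_i}$ becomes an eta-quotient of weight $\sum_i e_i$ whose divisor is supported on the cusps of $\Gamma_0(N)$.

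First I would bring each identity to ``polynomial form''. Identities stated as a quotient are cleared of denominators; identities containing radicals, such as $\sqrt{\Pi_{q^2}\Pi_{q^6}}$ in \eqref{G-6-3} or $\sqrt{\Pi_{q^9}}$ in \eqref{L9-2}, are squared, the ascent from the squared identity back to the stated one being justified by comparing leading $q$-powers together with Lemma~\ref{lem-square}; identities carrying a $\pm$, such as \eqref{G-6-6}, \eqref{G-6-7} and \eqref{G-10-1}--\eqref{G-10-4}, are split into their two sign cases (at times related by $q\mapsto -q$). In every case one arrives at a homogeneous relation
\[
\sum_{j} c_j\,\prod_{i}\Pi_{q^{n_i}}^{e_{ij}}=0,\qquad \sum_i e_{ij}=k\ \text{ for every }j,
\]
of a common degree $k$; that a common degree can be achieved is exactly Gosper's ``$a+b+c+\cdots$ constant modulo $4$'' remark.

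Next I would place this relation inside a single space of modular forms. Writing each $f_j:=\prod_i\Pi_{q^{n_i}}^{e_{ij}}$ as an eta-quotient, I would check, via the standard modularity criterion for eta-quotients, that all the $f_j$ carry one and the same character $\chi$ and weight $k$ on $\Gamma_0(N)$, and compute, via the standard order formula for eta-quotients, the order of $f_j$ at every cusp of $\Gamma_0(N)$. Multiplying the relation through by a fixed eta-quotient $h$ chosen to clear all of the (finitely many, explicitly bounded) poles at cusps replaces $\sum_j c_j f_j$ by $\sum_j c_j(hf_j)$, in which each $hf_j$ is now a \emph{holomorphic} modular form of weight $k'=k+\mathrm{wt}(h)$ and character $\chi$ on $\Gamma_0(N)$. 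A nonzero holomorphic modular form of weight $k'$ on $\Gamma_0(N)$ cannot vanish at $\infty$ to order exceeding $\tfrac{k'}{12}[\mathrm{SL}_2(\mathbb{Z}):\Gamma_0(N)]$, so it suffices to expand $\sum_j c_j(hf_j)$ as a $q$-series and confirm that its first $\lfloor\tfrac{k'}{12}[\mathrm{SL}_2(\mathbb{Z}):\Gamma_0(N)]\rfloor+1$ coefficients vanish, a finite and purely mechanical computation. For the genus-zero levels $8$, $12$, $16$, $18$ one may instead express each ratio $f_j/f_{j_0}$ as a rational function of a hauptmodul of $\Gamma_0(N)$ and reduce the claim to a polynomial identity in one variable; since $X_0(20)$ has genus one, the level-$20$ identities are handled either by passing to a genus-zero Atkin--Lehner quotient or simply by the valence count above. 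The new identities \eqref{L8-2} and \eqref{L18-5} are treated in exactly the same way at levels $16$ and $18$.

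The principal obstacle is organizational rather than conceptual: determining the correct common character $\chi$ and the cusp orders of the relevant eta-quotients for each $N\le 20$ (there are up to eight cusps to track), and, for the square-root and $\pm$ identities, keeping sufficiently tight control of signs to lift each verified polynomial identity back to its stated radical form through Lemma~\ref{lem-square}. I expect the level-$20$ identities \eqref{G-10-1}--\eqref{G-10-5} to be the most delicate, both because $X_0(20)$ has positive genus and because several of them mix radicals, quotients, and high-degree monomials at once.
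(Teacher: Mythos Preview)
Your approach is essentially the same as the paper's: both reduce each identity to an equality of holomorphic modular forms on $\Gamma_0(N)$ via $\Pi_{q^n}=\eta(2n\tau)^4/\eta(n\tau)^2$ and Proposition~\ref{prop}, verify by Sturm's bound (Lemma~\ref{lem-Sturm}), and lift the radical cases back through Lemma~\ref{lem-square}. Two minor remarks: once denominators are cleared, every monomial $\prod_i\Pi_{q^{n_i}}^{e_{ij}}$ with $e_{ij}\ge 0$ is already holomorphic at all cusps by~\eqref{cond-3}, so your auxiliary multiplier $h$ is never needed; and for the level-$20$ identities whose terms violate~\eqref{cond-2}, the paper simply substitutes $q\mapsto q^2$ and works in $M_k(\Gamma_0(40))$ rather than passing to an Atkin--Lehner quotient.
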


\begin{theorem}\label{thm-Lambert}
All the identities \eqref{LL1-1}--\eqref{LL20-1} and \eqref{La18-4} are true.
\end{theorem}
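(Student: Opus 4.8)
The plan is to turn every Lambert-series identity in the list into an equality of holomorphic modular forms of a fixed weight on $\Gamma_0(N)$, with $N$ the stated level, and then to settle each such equality by a finite $q$-coefficient check. Two observations drive this. First, let $E_2(\tau)=1-24\sum_{n\geq1}\sigma_1(n)q^n$ be the quasimodular Eisenstein series of weight $2$; then $dE_2(d\tau)-E_2(\tau)$ is a holomorphic modular form of weight $2$ on $\Gamma_0(d)$ for every $d\geq1$. Since $\sum_{n\geq1}q^n/(1-q^n)^2=\sum_{n\geq1}\sigma_1(n)q^n=(1-E_2(\tau))/24$, and since the dilated sums $\sum q^{dn}/(1-q^{dn})^2$ and the odd-index sums $\sum q^{(2n-1)d}/(1-q^{(2n-1)d})^2$ arise from this one by $\tau\mapsto d\tau$ and by subtracting the even part, every Lambert-series combination occurring on the left of \eqref{LL2-1}--\eqref{LL20-1} and \eqref{La18-4} equals an explicit $\mathbb{C}$-linear combination of the forms $dE_2(d\tau)-E_2(\tau)$ up to an additive constant that is absorbed by the constants on the right; in particular it is a holomorphic modular form of weight $2$ on $\Gamma_0(N)$. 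Second, $\Pi_{q^n}=q^{n/4}\psi(q^n)^2=\eta(2n\tau)^4/\eta(n\tau)^2$ is a holomorphic modular form of weight $1$ on $\Gamma_0(2n)$, so every homogeneous polynomial of degree $k$ in $\Pi_{q^{n_1}},\dots,\Pi_{q^{n_m}}$ is a holomorphic modular form of weight $k$ on $\Gamma_0(N)$.

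With these in hand I would process each identity as follows. For an identity whose right-hand side is rational in the $\Pi_{q^n}$, simply clear all denominators. For the identities containing radicals --- \eqref{LL5-2}, \eqref{LL9-2}, \eqref{LL9-3}, \eqref{LL20-1} --- note that each expression $\sqrt{\Pi_{q^a}/\Pi_{q^b}}$ that appears is itself a weight-zero modular function on $\Gamma_0(N)$ (e.g.\ $\sqrt{\Pi_q/\Pi_{q^9}}=q^{-1}\psi(q)/\psi(q^9)$ on $\Gamma_0(18)$), and the $\Pi_q$-identities of Theorem \ref{thm-id} let one rewrite the full expression under each square root as a \emph{perfect square} of a rational function of the $\Pi_{q^n}$ (for instance \eqref{L9-2} gives $(\Pi_q/\Pi_{q^9})^{3/2}-3(\Pi_q/\Pi_{q^9})+3(\Pi_q/\Pi_{q^9})^{1/2}=(\Pi_{q^3}/\Pi_{q^9})^2$), so the radical disappears; alternatively one may square both sides outright and recover the stated identity from Lemma \ref{lem-square} together with the sign check of Remark \ref{rem-square}. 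After these manipulations each identity has become an equality of holomorphic modular forms of one weight $k$ --- with $k\in\{2,4,6\}$ in the cases at hand --- on one group $\Gamma_0(N)$.

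The verification is then standard: the difference $F$ of the two sides is a holomorphic modular form of weight $k$ on $\Gamma_0(N)$, hence vanishes identically once its $q$-expansion vanishes up to the Sturm bound $\lfloor k\,[\mathrm{SL}_2(\mathbb{Z}):\Gamma_0(N)]/12\rfloor$; as the levels are at most $20$ and the weights at most $6$, this is a short computation in each case. Equivalently, and in the spirit of the rest of the paper, one may divide the two sides of $F=0$ by a fixed nonzero weight-$k$ form on $\Gamma_0(N)$, express each resulting modular function as a rational function of a hauptmodul by reading off its divisor at the cusps, and compare the finitely many coefficients needed to determine that rational function. The identity \eqref{LL1-1} is slightly apart: a purely formal manipulation gives $6\sum_{n\geq1}q^{4n-2}/(1-q^{2n-1})^4+\sum_{n\geq1}q^{2n-1}/(1-q^{2n-1})^2=\sum_{k\ \mathrm{odd}}\sum_{m\geq1}m^3q^{km}=\sum_{n\geq1}\big(\sum_{d\mid n,\ n/d\ \mathrm{odd}}d^3\big)q^n$, which identifies the first three expressions with one another and reduces \eqref{LL1-1} to the single weight-$4$ equality $\Pi_q^4=\eta(2\tau)^{16}/\eta(\tau)^8=\sum_{n\geq1}\big(\sum_{d\mid n,\ n/d\ \mathrm{odd}}d^3\big)q^n$ on $\Gamma_0(4)$, again closed by a coefficient check; and wherever a Lambert series is set equal to two distinct $\Pi_q$-expressions (as in \eqref{LL2-3}, \eqref{LL4-1}, \eqref{LL6-1}), the equality of those two expressions is already part of Theorem \ref{thm-id}, so only one new verification is required.

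The one genuine subtlety, and the step to be careful about, is the non-modularity of $E_2$: one must check that the specific combinations of Lambert series that occur are exactly those for which the non-holomorphic correction to $E_2$ cancels, so that the left-hand sides really are modular forms of the claimed level; this is routine bookkeeping but must be done. A secondary point is the choice of branch for the four identities with radicals, which is precisely what Lemma \ref{lem-square}, together with a glance at the leading $q$-terms, settles. Everything else is a finite computation.
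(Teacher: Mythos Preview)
Your approach is essentially the paper's: rewrite each Lambert combination as an integer linear combination of the weight-$2$ forms $dE_2(d\tau)-E_2(\tau)$, recognize the $\Pi_q$-side (after clearing denominators) as a modular form of the same weight and level, and finish with Sturm's bound, squaring first and invoking Lemma~\ref{lem-square} for the radical cases. The paper does exactly this, level by level.

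One statement needs correcting. You assert that $\Pi_{q^n}=\eta(2n\tau)^4/\eta(n\tau)^2$ is a holomorphic modular form of weight~$1$ on $\Gamma_0(2n)$, and hence that \emph{every} degree-$k$ homogeneous polynomial in the $\Pi_{q^{n_i}}$ lies in $M_k(\Gamma_0(N))$. This is false: $\Pi_{q^n}=q^{n/4}(1+\cdots)$ has fractional leading exponent unless $4\mid n$, so a monomial $\prod_i\Pi_{q^{n_i}}^{k_i}$ is modular on $\Gamma_0(N)$ only when $\sum_i k_in_i\equiv 0\pmod 4$ (condition~\eqref{cond-2} of Proposition~\ref{prop}), and holomorphy at the cusps is a further check via~\eqref{cond-3}. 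The paper is careful here, invoking Proposition~\ref{prop} separately for each specific $\Pi_q$-expression that arises; in every case the conditions are met, but you must verify them rather than appeal to a general claim that fails. Two minor slips: \eqref{LL20-1} has no radical, and after the necessary clearing and squaring the weights go up to $8$ (for \eqref{LL9-3} and \eqref{LL20-1}), not $6$.
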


The paper is organized as follows. In Section \ref{sec-proof} we first collect some useful facts from the theory of modular forms, and then we give proofs for Theorems \ref{thm-main} and \ref{thm-id}. In Section \ref{sec-Lambert} we present proof for Theorem \ref{thm-Lambert}. We emphasize here that our proofs give not only a way of verification  but also a method for rediscovering Gospers' identities. The strategy for discovering Gosper's identities will be explained during the proofs. Finally, in Section \ref{sec-hauptmodul} we illustrate a different method using hauptmoduls on congruence subgroups of genus zero. This method can explain some of Gosper's identities very well.

\section{Proofs of Gosper's $\Pi_q$-identities}\label{sec-proof}
We first recall some basic results about eta products. The Dedekind eta function is defined as
\begin{align}
\eta(z):=q^{1/24}\prod\limits_{n=1}^\infty (1-q^n), \quad q=e^{2\pi iz}, \quad \mathrm{Im} z>0.
\end{align}
We call
\begin{align}\label{eta-prod}
f(z)=\prod\limits_{\delta|N}\eta(\delta z)^{r_\delta}
\end{align}
as an eta product or eta quotient.

Let $\mathbb{H}$ be the upper half plane, i.e., $\mathbb{H}=\{z: \mathrm{Im} z>0\}$. Let $\mathbb{H}^*=\mathbb{H}\cup \mathbb{Q}\cup \{\infty\}$. For any meromorphic function $f:\mathbb{H}^{*}\rightarrow \mathbb{C}$ and $p\in \mathbb{H}^*$, we use $\mathrm{ord}(f,p)$ to denote the order of $f$ at $p$.

The full modular group is given by
\begin{align}
\mathrm{SL}_2(\mathbb{Z}):=\left\{\begin{pmatrix}
a & b \\ c & d
\end{pmatrix}: ad-bc=1, a,b,c,d\in \mathbb{Z}\right\}.
\end{align}
The following lemmas give sufficient conditions for an eta product to be a modular form on the congruence subgroup
\begin{align}
\Gamma_0(N):=\left\{\begin{pmatrix}
a & b \\ c & d
\end{pmatrix}\in \mathrm{SL}_2(\mathbb{Z}): c\equiv 0 \!\!\!\pmod{N}\right\}.
\end{align}
\begin{lemma}\label{lem-modular}
(Cf.\ \cite[Theorem 1.64]{Ono-web}.) If $f(z)$ is an eta product given in \eqref{eta-prod} with $k=\frac{1}{2}\sum_{\delta|N} r_\delta \in \mathbb{Z}$ with the additional properties that
\begin{align}
\sum_{\delta |N}\delta r_\delta\equiv 0 \pmod{24}
\end{align}
and
\begin{align}
\sum_{\delta| N}\frac{N}{\delta}r_\delta\equiv 0 \pmod{24},
\end{align}
then $f(z)$ satisfies
\begin{align}
f\left(\frac{az+b}{cz+d} \right)=\chi(d)(cz+d)^kf(z)
\end{align}
for every $\begin{pmatrix}
a & b \\ c &d
\end{pmatrix}\in \Gamma_0(N)$. Here the character $\chi$ is defined by $\chi(d):=\left(\frac{(-1)^ks}{d}\right)$ where $s:=\prod_{\delta|N}\delta^{r_\delta}$.
\end{lemma}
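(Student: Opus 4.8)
This is the classical criterion of Newman (it is \cite[Theorem 1.64]{Ono-web}), and the plan is to bootstrap everything from the transformation law of $\eta$ under the full modular group. Recall that for $\gamma=\begin{pmatrix} a & b \\ c & d\end{pmatrix}\in\mathrm{SL}_2(\mathbb{Z})$ with $c>0$ one has $\eta(\gamma z)=\varepsilon(\gamma)\,(cz+d)^{1/2}\eta(z)$, where the eta multiplier $\varepsilon(\gamma)$ is the $24$-th root of unity given by Dedekind's formula $\varepsilon(\gamma)=\exp\!\big(\pi i(\tfrac{a+d}{12c}-s(d,c)-\tfrac14)\big)$ in terms of the Dedekind sum $s(d,c)$, equivalently by Petersson's closed form writing $\varepsilon(\gamma)$ as a Jacobi symbol $\big(\tfrac{d}{c}\big)$ or $\big(\tfrac{c}{d}\big)$ times an explicit root of unity, with a case split on the parity of $c$ (the cases $c\le 0$ reduce to this by replacing $\gamma$ with $-\gamma$, and $c=0$ is the translation case, which is immediate). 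The first observation is the elementary identity: if $\delta\mid N$ and $\gamma=\begin{pmatrix} a & b \\ c & d\end{pmatrix}\in\Gamma_0(N)$, then as fractional linear maps on $\mathbb{H}$ one has $\delta\cdot(\gamma z)=\gamma^{(\delta)}\cdot(\delta z)$, where $\gamma^{(\delta)}:=\begin{pmatrix} a & \delta b \\ c/\delta & d\end{pmatrix}$ lies in $\mathrm{SL}_2(\mathbb{Z})$ because $N\mid c$ forces $\delta\mid c$ and $\det\gamma^{(\delta)}=ad-bc=1$ (in fact $\gamma^{(\delta)}\in\Gamma_0(N/\delta)$). Consequently $\eta(\delta\gamma z)=\eta\big(\gamma^{(\delta)}(\delta z)\big)=\varepsilon(\gamma^{(\delta)})\,(cz+d)^{1/2}\eta(\delta z)$.

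\textbf{Assembling the transformation.} Raising this to the power $r_\delta$ and multiplying over all $\delta\mid N$ gives at once
\[
f(\gamma z)=\prod_{\delta\mid N}\eta(\delta\gamma z)^{r_\delta}=\mu(\gamma)\,(cz+d)^{\frac12\sum_\delta r_\delta}\prod_{\delta\mid N}\eta(\delta z)^{r_\delta}=\mu(\gamma)\,(cz+d)^{k}f(z),
\]
where $\mu(\gamma):=\prod_{\delta\mid N}\varepsilon(\gamma^{(\delta)})^{r_\delta}$ and the weight is exactly $k=\tfrac12\sum_\delta r_\delta\in\mathbb{Z}$ by hypothesis. So the shape of the automorphy factor and the value of the weight are automatic, and the whole problem reduces to proving that, under the two congruence conditions, the multiplier collapses to
\[
\mu(\gamma)=\Big(\frac{(-1)^k s}{d}\Big),\qquad s=\prod_{\delta\mid N}\delta^{r_\delta}
\]
(a Kronecker symbol); in particular this exhibits $\mu(\gamma)$ as depending only on $d$, so that $\chi(d):=\big(\tfrac{(-1)^ks}{d}\big)$ is a well-defined character, which is the entire content of the lemma. (Verifying the claim instead on a generating set of $\Gamma_0(N)$ plus the cusps would still require the eta transformation at each generator and a consistency check, so it does not circumvent this.)

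\textbf{The main obstacle: the multiplier identity.} I expect this last step to be the only nontrivial part. Substituting Petersson's formula into $\mu(\gamma)$ splits it as a product of Jacobi symbols times a product of roots of unity. Writing $c=Nc_0$ (so $c/\delta=c_0\cdot N/\delta$), the exponent of the root-of-unity part, after regrouping, is a $\mathbb{Z}$-linear combination of the three sums $\sum_\delta r_\delta\,\delta$, $\sum_\delta r_\delta\,(N/\delta)$ and $\sum_\delta r_\delta=2k$, with coefficients that are integers times $\tfrac{\pi i}{12}$ (or $\tfrac{\pi i}{24}$ in the even cases): the hypotheses $\sum_\delta \delta r_\delta\equiv 0$ and $\sum_\delta (N/\delta)r_\delta\equiv 0\pmod{24}$ are exactly what make this factor trivial up to a sign absorbed into $(-1)^k$ (the factor $3$ in $24$ controls the $\exp(\tfrac{\pi i}{12}(\cdots))$ contribution, the factor $8$ controls the $2$-adic contributions, i.e.\ the $\big(\tfrac{2}{\cdot}\big)$ symbols arising when $c$ or $d$ is even). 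For the Jacobi-symbol part, multiplicativity pulls out $\big(\tfrac{d}{c_0}\big)^{\sum_\delta r_\delta}=\big(\tfrac{d}{c_0}\big)^{2k}=1$, reducing to $\prod_\delta\big(\tfrac{d}{N/\delta}\big)^{r_\delta}$; quadratic reciprocity flips this to $\prod_\delta\big(\tfrac{N/\delta}{d}\big)^{r_\delta}$ times a sign, and the relation $\prod_\delta\delta^{r_\delta}\cdot\prod_\delta(N/\delta)^{r_\delta}=N^{2k}$ together with $\prod_\delta\big(\tfrac{N}{d}\big)^{r_\delta}=\big(\tfrac{N}{d}\big)^{2k}=1$ collapses the product to $\big(\tfrac{s}{d}\big)$, while the reciprocity sign, being $(-1)^{\frac{d-1}{2}\cdot\frac12\sum_\delta r_\delta(N/\delta-1)}=(-1)^{\frac{d-1}{2}(12m-k)}=\big(\tfrac{(-1)^k}{d}\big)$ by the $24$-congruences, supplies exactly the missing factor $(-1)^k$. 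Combining, $\mu(\gamma)=\big(\tfrac{(-1)^k s}{d}\big)=\chi(d)$, as asserted. The \emph{genuinely delicate} part is organizing the parity-of-$c$ and parity-of-$d$ cases, the Dedekind-sum reciprocity, and the two quadratic-reciprocity flips so that every error term is precisely one of the two $24$-divisibility hypotheses; this bookkeeping is Newman's, recorded in \cite[Theorem 1.64]{Ono-web} and the references cited there, so one may invoke it rather than reproduce it. (Structurally, the Fricke involution $z\mapsto -1/(Nz)$ sends $\prod_\delta\eta(\delta z)^{r_\delta}$ to a multiple of $\prod_\delta\eta((N/\delta)z)^{r_\delta}$ and interchanges the two congruence conditions, which is why they occur symmetrically and why both are needed across the cases.)
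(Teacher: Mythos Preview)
The paper does not prove this lemma at all: it is quoted verbatim from \cite[Theorem~1.64]{Ono-web} as a standard input, with no argument supplied. Your proposal is therefore not a reconstruction of the paper's proof but an independent sketch of the classical Newman--Ligozat argument.

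As such a sketch, your outline is sound. The reduction $\delta\cdot(\gamma z)=\gamma^{(\delta)}\cdot(\delta z)$ with $\gamma^{(\delta)}=\begin{pmatrix} a & \delta b\\ c/\delta & d\end{pmatrix}\in\mathrm{SL}_2(\mathbb{Z})$ is correct, the identification of the weight and the reduction of the problem to the multiplier identity $\mu(\gamma)=\big(\tfrac{(-1)^ks}{d}\big)$ are exactly right, and your indication of how the two $24$-divisibility hypotheses kill the root-of-unity contribution while quadratic reciprocity organizes the Jacobi-symbol part is the standard route. You are also honest that the parity bookkeeping is delicate and ultimately defer to the cited reference for it; since the lemma is explicitly imported from \cite{Ono-web}, that is entirely appropriate here. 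In short: the paper treats this as a black box, and your write-up correctly opens the box partway before re-closing it with the same citation.
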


\begin{lemma}\label{lem-ord}
(Cf.\ \cite[Theorem 1.65]{Ono-web}.) Let $r,s$ and $N$ be positive integers with $s|N$ and $\gcd(r,s)=1$. If $f(z)$ is an eta product satisfying the conditions of Lemma \ref{lem-modular} for $N$, then
\begin{align}
\mathrm{ord}(f,\frac{r}{s})=\frac{N}{24}\sum_{\delta |N}\frac{\gcd(s,\delta)^2r_\delta}{\gcd(s,\frac{N}{s})s\delta}.
\end{align}
\end{lemma}
Let $M_k(\Gamma_0(N),\chi)$ denote the space of modular forms on $\Gamma_0(N)$ with Nebentypus $\chi$. In particular, when $\chi$ is trivial, we also write it as $M_k(\Gamma_0(N))$. If $f(z)$ is an eta-product satisfying the conditions of Lemma \ref{lem-modular} and its orders at cusps of $\Gamma_0(N)$ are all nonnegative, then $f(z)$ belongs to  $M_k(\Gamma_0(N),\chi)$.

As a consequence of Lemmas \ref{lem-modular} and \ref{lem-ord}, we have the following proposition, which is the starting point of our proofs of the theorems.
\begin{prop}\label{prop}
Let $f(z)=\Pi_{q^{n_1}}^{k_1}\cdots\Pi_{q^{n_m}}^{k_m}$ with $1\leq n_1<\cdots <n_m$ and $k_1,\cdots,k_m\in \frac{1}{2}\mathbb{Z}$. Let $N:=2\mathrm{lcm}\{n_1,\cdots,n_m\}$.  Suppose that
\begin{align}
k:=k_1+\cdots+k_m & \in \mathbb{Z}, \label{cond-1} \\
k_1n_1+\cdots+k_mn_m & \equiv 0 \pmod{4}. \label{cond-2}
\end{align}
Then $f(z)$ satisfies
\begin{align}
f\left(\frac{az+b}{cz+d} \right)=\chi(d)(cz+d)^kf(z)
\end{align}
for every $\begin{pmatrix}
a & b \\ c &d
\end{pmatrix}\in \Gamma_0(N)$ where $\chi(d):=\left(\frac{(-1)^k}{d}\right)$. Furthermore, let $r,s$ be positive integers with $s|N$ and $\gcd(r,s)=1$, we have
\begin{align}
\mathrm{ord}(f(z),\frac{r}{s})=\frac{N}{24s\gcd(s,\frac{N}{s})}\sum_{i=1}^m\frac{2k_i}{n_i}\left(\gcd(s,2n_i)^2-\gcd(s,n_i)^2  \right).
\end{align}
If in addition we have for any $s|N$ that
\begin{align}\label{cond-3}
\sum_{i=1}^m \frac{k_i}{n_i}\left(\gcd(s,2n_i)^2-\gcd(s,n_i)^2 \right)\geq 0,
\end{align}
then $f(z)\in M_{k}(\Gamma_0(N),\chi)$.
\end{prop}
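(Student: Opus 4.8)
The plan is to recognize $f$ as a Dedekind eta quotient and then apply Lemmas \ref{lem-modular} and \ref{lem-ord} essentially verbatim. The entry point is the eta-quotient form of $\Pi_q$: by the Jacobi triple product $\psi(q)=(q^2;q^2)_\infty^2/(q;q)_\infty$, so $\Pi_q=q^{1/4}\psi(q)^2=q^{1/4}(q^2;q^2)_\infty^4/(q;q)_\infty^2$, and substituting $(q;q)_\infty=q^{-1/24}\eta(z)$ and $(q^2;q^2)_\infty=q^{-1/12}\eta(2z)$ the fractional power of $q$ cancels and one gets $\Pi_q=\eta(2z)^4/\eta(z)^2$. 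Replacing $z$ by $n_iz$ gives $\Pi_{q^{n_i}}=\eta(2n_iz)^4/\eta(n_iz)^2$, hence
\[
f(z)=\prod_{i=1}^m\eta(2n_iz)^{4k_i}\,\eta(n_iz)^{-2k_i}.
\]
Since every $2n_i$ and every $n_i$ divides $N=2\,\mathrm{lcm}\{n_1,\dots,n_m\}$, this is an eta quotient of the shape \eqref{eta-prod} for this $N$, with exponents $r_\delta=\sum_{i:\,2n_i=\delta}4k_i-\sum_{i:\,n_i=\delta}2k_i$.

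Next I would verify the hypotheses of Lemma \ref{lem-modular} for this $N$. The weight is $\tfrac12\sum_{\delta}r_\delta=\tfrac12\sum_i(4k_i-2k_i)=\sum_ik_i=k$, which lies in $\mathbb{Z}$ precisely by \eqref{cond-1}. The first congruence reads $\sum_\delta\delta r_\delta=\sum_i(2n_i\cdot4k_i-n_i\cdot2k_i)=6\sum_ik_in_i$, which is divisible by $24$ exactly when $\sum_ik_in_i\equiv0\pmod4$, i.e.\ \eqref{cond-2}. The key observation is that the second congruence is automatic: $\sum_\delta\tfrac N\delta r_\delta=\sum_i\bigl(\tfrac N{2n_i}\cdot4k_i-\tfrac N{n_i}\cdot2k_i\bigr)=\sum_i\bigl(\tfrac{2Nk_i}{n_i}-\tfrac{2Nk_i}{n_i}\bigr)=0$, which is why no hypothesis beyond \eqref{cond-1}--\eqref{cond-2} is needed. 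Lemma \ref{lem-modular} then supplies the stated transformation law with the quadratic Nebentypus $d\mapsto\bigl(\tfrac{(-1)^ks}{d}\bigr)$, $s=\prod_\delta\delta^{r_\delta}=2^{4k}\prod_i n_i^{2k_i}$; computing the square class of $s$ (the factor $2^{4k}$ is a perfect square and each $2k_i\in\mathbb{Z}$) identifies this character with $\bigl(\tfrac{(-1)^k}{d}\bigr)$.

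For the order at a cusp $r/s$ with $s\mid N$, $\gcd(r,s)=1$, I would feed the same exponents into Lemma \ref{lem-ord} and pair the $\delta=2n_i$ and $\delta=n_i$ contributions:
\[
\mathrm{ord}\Bigl(f,\tfrac rs\Bigr)=\frac{N}{24}\sum_{\delta\mid N}\frac{\gcd(s,\delta)^2r_\delta}{\gcd(s,N/s)\,s\,\delta}
=\frac{N}{24\,s\,\gcd(s,N/s)}\sum_{i=1}^m\frac{2k_i}{n_i}\Bigl(\gcd(s,2n_i)^2-\gcd(s,n_i)^2\Bigr),
\]
using $\tfrac{4k_i}{2n_i}=\tfrac{2k_i}{n_i}$ for the first term and $\tfrac{2k_i}{n_i}$ for the second. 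Finally, because $\eta$ is non-vanishing on $\mathbb{H}$, the product $f$ is holomorphic and non-vanishing on $\mathbb{H}$, so $f\in M_k(\Gamma_0(N),\chi)$ as soon as it is holomorphic at every cusp, i.e.\ as soon as $\mathrm{ord}(f,r/s)\ge0$ for all $s\mid N$; since the prefactor $\tfrac{N}{24\,s\,\gcd(s,N/s)}$ is positive, this is exactly hypothesis \eqref{cond-3}.

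There is no deep obstacle: the statement is a packaged application of the two eta-quotient lemmas, so the real work is careful bookkeeping. The points that need attention are collapsing the three congruences of Lemma \ref{lem-modular} to just \eqref{cond-1}--\eqref{cond-2} (in particular noticing that $\sum_\delta\tfrac N\delta r_\delta$ vanishes identically), determining the square class of $s=2^{4k}\prod_i n_i^{2k_i}$ to pin down the Nebentypus, and re-indexing the divisor sum in Lemma \ref{lem-ord} as a sum over $i=1,\dots,m$ to obtain the compact order formula. One should also record the degenerate case in which some $2n_i$ equals some $n_j$, so that the exponents $r_\delta$ genuinely merge; this changes nothing in the argument, and none of the identities in the paper are sensitive to how the exponents are grouped.
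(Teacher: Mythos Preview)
Your approach is exactly the paper's: write $\Pi_q=\eta(2z)^4/\eta(z)^2$, express $f$ as an eta quotient, and invoke Lemmas \ref{lem-modular} and \ref{lem-ord}. The paper's proof is three lines and omits all of the bookkeeping you supply; your verifications of the weight condition, of the two congruences modulo $24$ (including the key observation that $\sum_\delta (N/\delta)r_\delta$ vanishes identically, which is why no third hypothesis is needed), of the order formula, and of the cusp-holomorphy criterion are all correct and match what the paper intends.

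The one step that deserves more care is the simplification of the Nebentypus. You correctly obtain $s=2^{4k}\prod_i n_i^{2k_i}$, but the parenthetical ``each $2k_i\in\mathbb{Z}$'' does not by itself force $\prod_i n_i^{2k_i}$ to lie in the trivial square class: if some $k_i$ is a genuine half-integer then $2k_i$ is odd and $n_i$ survives in the square-free part of $s$. For instance, with $(n_1,n_2,k_1,k_2)=(1,7,\tfrac12,\tfrac12)$ one has $k=1$, condition \eqref{cond-2} holds, and $s=112$, so Lemma \ref{lem-modular} gives the character $d\mapsto\bigl(\tfrac{-7}{d}\bigr)$ rather than $\bigl(\tfrac{-1}{d}\bigr)$. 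This is really an imprecision in the proposition as stated rather than a flaw in your strategy; the paper's own proof does not address it either, and in every application in the paper the $k_i$ are integers (or one squares before applying the proposition, as in the treatment of \eqref{G-6-3}), so that $\prod_i n_i^{2k_i}$ is a perfect square and the simplification is valid. If you want your write-up to match the stated conclusion exactly, either add the hypothesis $k_i\in\mathbb{Z}$ for the character assertion or record the general character as $\bigl(\tfrac{(-1)^k\prod_i n_i^{2k_i}}{d}\bigr)$.
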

\begin{proof}
We first note that
\begin{align}
\Pi_q=\frac{\eta^4(2z)}{\eta^2(z)}.
\end{align}
We have
\begin{align}
f(z)=\prod\limits_{i=1}^m \frac{\eta^{4k_i}(2n_iz)}{\eta^{2k_i}(n_iz)}.
\end{align}
The proposition then follows from Lemmas \ref{lem-modular} and \ref{lem-ord}.
\end{proof}

The following result, known as Sturm's criterion, will also be needed to verify the equalities between modular forms.
\begin{lemma}\label{lem-Sturm}
(Cf.\ \cite{Sturm}) Let $\Gamma\subseteq \mathrm{SL}_2(\mathbb{Z})$ be a congruence subgroup and let $f\in M_k(\Gamma)$. If
\begin{align}
\mathrm{ord}(f,\infty)>\frac{k}{12}[\mathrm{SL}_2(\mathbb{Z}):\Gamma],
\end{align}
then $f$ is identically zero.
\end{lemma}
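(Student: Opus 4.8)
The plan is the classical ``norm down to $\mathrm{SL}_2(\mathbb{Z})$'' argument, which reduces everything to the valence formula on the full modular group. First I would make two harmless reductions. Replacing $f$ by $f^2$ and $k$ by $2k$ doubles both sides of the hypothesis $\mathrm{ord}(f,\infty)>\tfrac{k}{12}[\mathrm{SL}_2(\mathbb{Z}):\Gamma]$ and annihilates $f$ exactly when it annihilates $f^2$, so I may assume that $k$ is even; and, aiming for a contradiction, I assume $f\not\equiv0$. Set $m:=[\mathrm{SL}_2(\mathbb{Z}):\Gamma]$ and fix right coset representatives $\gamma_1=I,\gamma_2,\dots,\gamma_m$ of $\Gamma\backslash\mathrm{SL}_2(\mathbb{Z})$.

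Next I would form the ``norm'' $g:=\prod_{j=1}^{m}f|_k\gamma_j$, where $|_k$ denotes the weight-$k$ slash operator, and check that $g\in M_{km}(\mathrm{SL}_2(\mathbb{Z}))$ with $g\not\equiv0$. The product is independent of the choice of representatives, since $f|_k(\delta\gamma_j)=(f|_k\delta)|_k\gamma_j=f|_k\gamma_j$ for every $\delta\in\Gamma$ — here one uses that $f$ has trivial Nebentypus and that $k$ is even, so that $-I$ contributes no sign. Consequently $g|_{km}\sigma=g$ for all $\sigma\in\mathrm{SL}_2(\mathbb{Z})$, because right multiplication by $\sigma$ merely permutes the cosets. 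Holomorphy of $g$ on $\mathbb{H}$ is clear, and holomorphy at the unique cusp $\infty$ of $\mathrm{SL}_2(\mathbb{Z})$ holds because each $\gamma_j(\infty)\in\mathbb{Q}\cup\{\infty\}$ is a cusp of $\Gamma$, where $f$ is holomorphic; hence $f|_k\gamma_j$ expands at $\infty$ as a power series in $e^{2\pi iz/h_j}$ with nonnegative exponents ($h_j$ the width of that cusp), and a finite product of such series, being $1$-periodic, is an honest power series in $q=e^{2\pi iz}$ with nonnegative exponents. Since no factor vanishes identically, $g\not\equiv0$.

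Then I would invoke the valence formula for $\mathrm{SL}_2(\mathbb{Z})$ applied to the nonzero holomorphic form $g$ of weight $km$:
\[
\mathrm{ord}(g,\infty)+\tfrac12\,\mathrm{ord}(g,i)+\tfrac13\,\mathrm{ord}(g,\rho)+\sum_{p}\mathrm{ord}(g,p)=\frac{km}{12},
\]
the last sum being over the remaining points of $\mathrm{SL}_2(\mathbb{Z})\backslash\mathbb{H}$. Since $g$ is holomorphic, every term on the left is $\ge0$, so $\mathrm{ord}(g,\infty)\le km/12$. On the other hand, the factor indexed by $\gamma_1=I$ equals $f$ itself and contributes $\mathrm{ord}(f,\infty)$ to the $q$-order of $g$, while every other factor contributes a nonnegative amount (for $\Gamma=\Gamma_0(N)$, as in all our applications, the cusp $\infty$ has width $1$, so this single factor suffices; for a general congruence subgroup one groups together the finitely many cosets lying over $\infty$ and argues in the same way). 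Hence
$$\mathrm{ord}(f,\infty)\le\mathrm{ord}(g,\infty)\le \frac{km}{12}=\frac{k}{12}[\mathrm{SL}_2(\mathbb{Z}):\Gamma],$$
contradicting the hypothesis. Therefore $f\equiv0$.

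The only genuine external input is the valence formula for $\mathrm{SL}_2(\mathbb{Z})$ — itself proved by integrating $\tfrac{1}{2\pi i}\,g'/g$ around the standard fundamental domain — together with the definition of holomorphy at the cusps. I expect the only delicate point to be the second step: verifying that the norm $g$ genuinely descends to a modular form for the \emph{full} modular group and not merely for $\Gamma$, which is exactly why the preliminary passage to even weight and the triviality of the Nebentypus are used. The order estimate in the third step is then routine; the resulting bound is not sharp in general, but the form stated in the lemma is what we need.
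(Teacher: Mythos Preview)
The paper does not supply its own proof of this lemma; it is stated with a bare citation to Sturm and used as a black box throughout. Your norm-down-to-$\mathrm{SL}_2(\mathbb{Z})$ argument is the standard proof and is correct as written, including the reduction to even weight and the parenthetical handling of the cusp width at $\infty$ for general $\Gamma$.
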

For our cases, we always choose $\Gamma=\Gamma_0(N)$ and hence the index is \cite[p.\ 14]{Diamond-Shurman}
\begin{align}\label{index}
[\mathrm{SL}_2(\mathbb{Z}):\Gamma_0(N)]=N\prod\limits_{p|N}\Big(1+\frac{1}{p}\Big).
\end{align}

We are now ready to prove Theorem \ref{thm-main}.
\begin{proof}[Proof of Theorem \ref{thm-main}]
Let $n=\mathrm{lcm}\{n_1,n_2,\cdots,n_m\}$ and $k$ be a positive integer. We define
\begin{align}
S:=\left\{\Pi_{q^{4n_1}}^{k_1}\cdots \Pi_{q^{4n_m}}^{k_m}: k_1+k_2+\cdots +k_m=2k,k_1,k_2,\cdots,k_m\geq 0\right\}.
\end{align}
Then the conditions \eqref{cond-1}, \eqref{cond-2} and \eqref{cond-3} are satisfied. Hence by Proposition \ref{prop}, $S$ is a subset of $M_{2k}(\Gamma_0(8n))$. Now recall that (see \cite[p.\ 12, Proposition 3]{Zagier})
\begin{align}
\dim M_{2k}(\Gamma_0(8n))\leq \frac{k}{6} [\mathrm{SL}_2(\mathbb{Z}):\Gamma_0(8n)]+1=2kn\prod\limits_{p|n,~p>2}\left(1+\frac{1}{p}\right)+1.
\end{align}
Note that
\begin{align}
|S|=\binom{2k+m-1}{m-1}.
\end{align}
Since $m\geq 3$, as functions of $k$, $|S|$ grows faster than $\dim M_{2k}(\Gamma_0(8n))$. Hence there exists some $k$ such that
\begin{align}\label{k-ineq}
\binom{2k+m-1}{m-1}>\dim M_{2k}(\Gamma_0(8n)).
\end{align}
Let $k_0$ be the smallest one among such $k$'s. Then for $k=k_0$, the elements in $S$ must be linearly dependent. Therefore, the functions $\Pi_{q^{4n_1}}, \cdots,\Pi_{q^{4n_m}}$ satisfy a nonzero homogeneous polynomial of degree $2k_0$. Replacing $q^4$ by $q$, we get the desired assertion.
\end{proof}
Note that $k_0$ may not be the smallest degree of nonzero homogeneous polynomial satisfied by $\Pi_{q^{n_1}}, \cdots, \Pi_{q^{n_m}}$. This can be seen from the following example.
\begin{exam}
For the index set $\{n_1,n_2,n_3,n_4\}=\{1,2,5,10\}$, we have $m=4$ and $n=10$. We find that the smallest $k$ satisfying \eqref{k-ineq} is $k_0=3$. Indeed, we have  $\dim M_6(\Gamma_0(80))=66$, which is less than $|S|=\binom{9}{3}=84$. Thus we deduce that $\Pi_{q}$, $\Pi_{q^2}$, $\Pi_{q^5}$, $\Pi_{q^{10}}$ must satisfy a homogeneous polynomial of degree $6$. It turns out that they satisfy a simpler homogenous polynomial of degree $4$ as in \eqref{G-10-5}.
\end{exam}

Before proving Theorem \ref{thm-id}, we need the following simple fact.
\begin{lemma}\label{lem-square}
Let
 $$f(q)=\sum_{n=n_1}^\infty a_nq^n \quad \text{and} \quad  g(q)=\sum_{n=n_2}^\infty b_nq^n$$
where $a_n,b_n\in \mathbb{R}$, $n_1,n_2\geq 0$, $a_{n_1}\neq 0$, $b_{n_2}\neq 0$. Suppose both $f(q)$ and $g(q)$ are holomorphic in the open unit disk $|q|<1$ and $f(q)^\ell=g(q)^\ell$ where $\ell$ is a positive integer. Then $n_1=n_2$ and there exists some $0\leq j <\ell$ such that $a_{n_1}=e^{2\pi ij/\ell}b_{n_2}$ and $f(q)=e^{2\pi ij/\ell}g(q)$.
\end{lemma}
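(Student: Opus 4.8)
The plan is to exploit the factorization of $x^\ell-y^\ell$ over $\mathbb{C}$ together with the fact that the ring of functions holomorphic on a connected open set has no zero divisors. Put $\zeta=e^{2\pi i/\ell}$, so that in $\mathbb{C}[x,y]$ one has $x^\ell-y^\ell=\prod_{j=0}^{\ell-1}(x-\zeta^j y)$. Substituting $x=f(q)$, $y=g(q)$ and using the hypothesis $f(q)^\ell=g(q)^\ell$ gives
\[
\prod_{j=0}^{\ell-1}\bigl(f(q)-\zeta^j g(q)\bigr)\equiv 0
\]
as an identity of functions holomorphic on the disk $|q|<1$.

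Next I would invoke the standard consequence of the identity theorem that a nonzero holomorphic function on the connected domain $|q|<1$ has only isolated zeros, so a finite product of such functions cannot be identically zero. Hence at least one factor vanishes identically: there is some $0\le j<\ell$ with $f(q)=\zeta^j g(q)$ for all $|q|<1$.

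It then remains to read off the two stated conclusions from the power series. From $\sum_{n\ge n_1}a_nq^n=\zeta^j\sum_{n\ge n_2}b_nq^n$, comparing lowest-order terms — which are $a_{n_1}q^{n_1}$ on the left (with $a_{n_1}\ne 0$) and $\zeta^j b_{n_2}q^{n_2}$ on the right (with $\zeta^j b_{n_2}\ne 0$) — forces $n_1=n_2$ and $a_{n_1}=\zeta^j b_{n_2}=e^{2\pi ij/\ell}b_{n_2}$, which is exactly the assertion. (One may remark that since $a_{n_1},b_{n_2}$ are real, $\zeta^j$ is in fact $\pm1$, but this is not needed.)

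I do not expect any real obstacle here: the only point needing a word of care is the invocation of the integral-domain property of holomorphic functions on a connected domain, and the rest is routine bookkeeping with leading coefficients. An equally short alternative, which I could present instead if it reads more cleanly, is to set $F(q)=q^{-n_1}f(q)$ and $G(q)=q^{-n_2}g(q)$, note $F(0),G(0)\ne 0$, deduce $n_1=n_2$ by comparing orders of vanishing of $f^\ell$ and $g^\ell$ at $0$, and then observe that $(F/G)^\ell\equiv 1$ with $F/G$ holomorphic and nonvanishing on a neighbourhood of $0$ forces $F/G$ to be a constant $\ell$-th root of unity by connectedness, after which the identity theorem propagates $f=e^{2\pi ij/\ell}g$ to the whole disk.
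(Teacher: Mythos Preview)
Your proof is correct and follows essentially the same approach as the paper: both factor $f^\ell-g^\ell=\prod_{j=0}^{\ell-1}(f-\zeta^j g)$ and then use the identity theorem to conclude that one factor vanishes identically. The paper phrases this step via a pigeonhole argument (one of the zero sets $\{q:|q|\le\tfrac12,\ f(q)=\zeta^j g(q)\}$ must be infinite, hence have a limit point), while you invoke the integral-domain property of holomorphic functions directly; these are equivalent formulations of the same idea, and your additional remarks extracting $n_1=n_2$ and $a_{n_1}=\zeta^j b_{n_2}$ make the conclusion more explicit than the paper does.
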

\begin{proof}
Since $\prod\limits_{j=0}^{\ell-1}(f(q)-e^{2\pi ij/\ell}g(q))=0$, we see that one of the sets $\{q: f(q)=e^{2\pi ij/\ell}g(q), |q|\leq \frac{1}{2}\}$ contains infinitely many points. By the theory of complex functions, this means that for some $0<j\leq \ell$, $f(q)=e^{2\pi ij/\ell}g(q)$ for all $|q|< 1$.
\end{proof}

\begin{proof}[Proof of Theorem \ref{thm-id}]
Since the proofs of these identities are similar, here we only illustrate the proof using two examples.

For the identity \eqref{G-6-3}, we first square both sides and consider the identity
\begin{align}\label{square-id}
\Pi_{q^2}\Pi_{q^6}\left(\Pi_{q}^2-3\Pi_{q^3}^2 \right)^2=\Pi_q\Pi_{q^3}\left(\Pi_{q^2}^2+3\Pi_{q^6}^2 \right)^2.
\end{align}
By Proposition \ref{prop},  both sides belong to $M_6(\Gamma_0(12))$. According to Lemma \ref{lem-Sturm}, by verifying that the first 13 coefficients of both sides match, we deduce that \eqref{square-id} holds. Taking square roots on both sides and comparing the leading coefficients, we see that \eqref{G-6-3} is true by Lemma \ref{lem-square}.

Our second example is the identity \eqref{G-10-4}. Replacing $q$ by $q^2$ in \eqref{G-10-4}, we see that the product on each side gives a modular form in the space $M_{10}(\Gamma_0(40))$. By Lemma \ref{lem-Sturm}, after verifying that the first 61 coefficients of both sides agree with each other, we prove this identity.
\end{proof}

\begin{rem}\label{rem-square}
As mentioned in the introduction, El Bachraoui \cite{Bachraoui} proved that the square of both sides of \eqref{G-6-2} are equal. Now by Lemma \ref{lem-square} we see that \eqref{G-6-2} holds.
\end{rem}

\section{Proofs of Gosper's Lambert series identities}\label{sec-Lambert}

In this section, we prove all the Lambert series identities stated by Gosper \cite{Gosper}. The method of our proof is quite standard and has been used by He \cite{HeAAM} in his proof of \eqref{LL9-2}. Comparing with He's verification of \eqref{LL9-2}, our proofs here contain more information as we can reproduce Gosper's identities without knowing them in advance.

Let us begin with some preparations. Let
\begin{align}
E_2(z)&:=1-24\sum_{n=1}^\infty \frac{nq^n}{1-q^n}=1-24\sum_{n=1}^\infty \sigma(n)q^n, \label{E2} \\
E_4(z)&:=1+240\sum_{n=1}^\infty \frac{n^3q^n}{1-q^n}=1+240\sum_{n=1}^\infty \sigma_3(n)q^n \label{E4}
\end{align}
be the classical Eisenstein series of weights 2 and 4, respectively. Here $\sigma_s(n):=\sum_{d|n,d>0}d^s$ and $\sigma(n)=\sigma_1(n)$.

Differentiating both sides of
\begin{align}\label{x-frac}
\frac{1}{1-x}=\sum_{n=0}^\infty x^n,
\end{align}
we obtain
\begin{align}
\frac{x}{(1-x)^2}=\sum_{n=1}^\infty nx^n.
\end{align}
This implies
\begin{align}\label{E2-exp}
\sum_{m\geq 1}\frac{q^m}{(1-q^m)^2}=\sum_{m,k\geq 1}mkq^{mk}=\sum_{n\geq 1} \sigma(n)q^n=\frac{1}{24}\left(1-E_2(z)\right).
\end{align}
We will also need the fact that \cite[Exercise 1.2.8(e)]{Diamond-Shurman}
\begin{align}\label{E2-modular}
E_2(z)-NE_2(Nz)\in M_2(\Gamma_0(N)).
\end{align}
\begin{proof}[Proof of Theorem \ref{thm-Lambert}]
We proceed our proofs according to different levels.

(1) (\textbf{Level 2}) Differentiating both sides of \eqref{x-frac} three times, we obtain
\begin{align}\label{x-frac-4}
\frac{1}{(1-x)^4}=\sum_{n=0}^\infty \frac{1}{6}(n+3)(n+2)(n+1)x^n.
\end{align}
Therefore,
\begin{align}
\frac{x(1+4x+x^2)}{(1-x)^4}=\sum_{n=1}^\infty n^3x^n.
\end{align}
Hence,
\begin{align*}
&6\sum_{n\geq 1}\frac{q^{4n-2}}{(1-q^{2n-1})^4}+\sum_{n\geq 1}\frac{q^{2n-1}}{(1-q^{2n-1})^2}=\sum_{n\geq 1}\frac{q^{2n-1}(1+4q^{2n-1}+q^{4n-2})}{(1-q^{2n-1})^4} \\
=&\sum_{n\geq 1}\sum_{k=1}^\infty k^3q^{(2n-1)k} =\sum_{k=1}^\infty k^3\sum_{n=1}^\infty q^{2nk-k} =\sum_{k=1}^\infty \frac{k^3q^k}{1-q^{2k}}.
\end{align*}
This proves the second equality in \eqref{LL1-1}.

To prove the first equality, we observe that
\begin{align}
\sum_{k=1}^\infty \frac{k^3q^k}{1-q^{2k}}=\sum_{k=1}^\infty \sum_{m=0}^\infty k^3q^{k(2m+1)} =\sum_{n=1}^\infty \Big(\sum_{k|n, ~ \frac{n}{k}\equiv 1 \!\! \pmod{2}} k^3 \Big)q^n=:\sum_{n=1}^\infty a(n)q^n.
\end{align}
Let $n=2^sn_0$ with $s\geq 0$ and $n_0$ odd. We have
\begin{align}\label{a-exp}
a(n)=\sum_{d|n_0}(2^sd)^3=2^{3s}\sigma_3(n_0)=\left(\sigma_3(2^s)-\sigma_3(2^{s-1})\right)\sigma_3(n_0)=\sigma_3(n)-\sigma_3(n/2).
\end{align}
Here we agree that $\sigma_3(x)=0$ when $x$ is not an integer.

From \eqref{a-exp} we deduce that
\begin{align}
\sum_{k=1}^\infty \frac{k^3q^k}{1-q^{2k}}=\sum_{n=1}^\infty \left( \sigma_3(n)-\sigma_3(n/2)\right)q^n=\frac{1}{240}\left(E_4(z)-E_4(2z)\right).
\end{align}
Since $E_4(z)$ is a modular form on $\mathrm{SL}_2(\mathbb{Z})$ of weight 4, we conclude that
$$E_4(z)-E_4(2z)\in M_4(\Gamma_0(2)).$$
Note that $\dim M_4(\Gamma_0(2))=2$. It is easy to see that $E_4(z)$ and $E_4(2z)$ form a basis for $M_4(\Gamma_0(2))$. Since $\Pi_q^4\in M_4(\Gamma_0(2))$,  by checking the coefficients of the first two terms in its $q$-expansion, we immediately obtain
\begin{align}
\Pi_q^4=\frac{1}{240}\left(E_4(z)-E_4(2z)\right).
\end{align}
This completes the proof of \eqref{LL1-1}.

(2) (\textbf{Level 4})  We first note that
\begin{align}
1+24\sum_{n\geq 1}\frac{q^n}{(1-q^n)^2}-48\sum_{n\geq 1}\frac{q^{2n}}{(1-q^{2n})^2}=2E_2(2z)-E_2(z)
\end{align}
belongs to $M_2(\Gamma_0(2))$. By Proposition \ref{prop} we see that both
\begin{align*}
\Pi_{q^2}^2 \quad \text{and} \quad \frac{\Pi_q^4}{\Pi_{q^2}^2}
\end{align*}
belong to $M_2(\Gamma_0(4))$. It is easy to see that these two functions are linearly independent. Since $\dim M_2(\Gamma_0(4))=2$, they form a basis of $M_2(\Gamma_0(4))$. Hence there exist constants $c_1,c_2$ such that
\begin{align*}
2E_2(2z)-E_2(z)=c_1\Pi_{q^2}^2+c_2\frac{\Pi_q^4}{\Pi_{q^2}^2}.
\end{align*}
Comparing the first two terms in their $q$-expansions, we deduce that $c_1=16$ and $c_2=1$. This proves \eqref{LL2-1}.

Similarly, using the fact that both
\begin{align*}
4E_2(4z)-E_2(z)=3+24\Big(\sum_{n\geq 1}\frac{q^n}{(1-q^n)^2}-4\sum_{n\geq 1}\frac{q^{4n}}{(1-q^{4n})^2}\Big)
\end{align*}
and
\begin{align*}
&24\Big(\sum_{n\geq 1}\frac{q^{2n-1}}{(1-q^{2n-1})^2}-2\sum_{n\geq 1}\frac{q^{4n-2}}{(1-q^{4n-2})^2}\Big) \\
&=(E_2(2z)-E_2(z))-2(E_2(4z)-E_2(2z)) \\
&=\left(2E_2(2z)-E_2(z) \right)-\left(2E_2(4z)-E_2(2z)\right)
\end{align*}
belong to $M_2(\Gamma_0(4))$,  by checking the first two terms in their series expansions, we get \eqref{LL2-2} and the first equality in \eqref{LL2-3}.

Note that
\begin{align*}
&\sum_{n=1}^\infty \frac{(2n-1)q^{2n-1}}{1-q^{4n-2}}=\sum_{n=1}^\infty \sum_{m=0}^\infty (2n-1)q^{(2n-1)(2m+1)} \nonumber \\
&=\sum_{n=0}^\infty \sigma(2n+1)q^{2n+1} =\sum_{n=1}^\infty \sigma(n)q^n-\sum_{n=1}^\infty \sigma(2n)q^{2n} \\
&=\sum_{n=1}^\infty \sigma(n)q^n-3\sum_{n=1}^\infty \sigma(n)q^{2n}+2\sum_{n=1}^\infty \sigma(n)q^{4n} \\
&=\frac{1}{24}\left(3E_2(2z)-E_2(z)-2E_2(4z)\right)
\end{align*}
belongs to $M_2(\Gamma_0(4))$. The second equality in \eqref{LL2-3} follows by checking the first two terms in the series expansions of both sides.

(3) (\textbf{Level 6})  We first note that
\begin{align}
2+24\sum_{n\geq 1}\frac{q^n}{(1-q^n)^2}-72\sum_{n\geq 1}\frac{q^{3n}}{(1-q^{3n})^2}=3E_2(3z)-E_2(z)
\end{align}
belongs to $M_2(\Gamma_0(3))$.  By Proposition \ref{prop} it is easy to verify that all the functions
\begin{align*}
\frac{\Pi_q^3}{\Pi_{q^3}}, \quad \Pi_q\Pi_{q^3}, \quad \frac{\Pi_{q^3}^3}{\Pi_q}
\end{align*}
belong to $M_2(\Gamma_0(6))$ and are linearly independent. Since $\dim M_2(\Gamma_0(6))=3$, they form a basis of $M_2(\Gamma_0(6))$. By comparing the first three terms, we see that \eqref{LL3-1} holds.

Next, note that
\begin{align*}
&24\Big(\sum_{n\geq 1}\frac{q^{2n-1}}{(1-q^{2n-1})^2}-3\sum_{n\geq 1}\frac{q^{6n-3}}{(1-q^{6n-3})^2}\Big) \\
=&\left(E_2(2z)-E_2(z)\right)-3\left(E_2(6z)-E_2(3z)\right) \\
=&\left(3E_2(3z)-E_2(z)\right)-\left(3E_2(6z)-E_2(2z)\right)
\end{align*}
belongs to $M_2(\Gamma_0(6))$. Using the aforementioned basis, we get \eqref{LL3-2} immediately.

(4) (\textbf{Level 8}) We have $\dim M_2(\Gamma_0(8))=3$. By Proposition \ref{prop} we see that both
\begin{align*}
\Pi_{q^2}^2 \quad \text{and} \quad \frac{\Pi_q^4}{\Pi_{q^2}^2}
\end{align*}
belong to $M_2(\Gamma_0(4))$. Therefore, replacing $q$ by $q^2$, we know that both
\begin{align*}
\Pi_{q^4}^2 \quad \text{and} \quad \frac{\Pi_{q^2}^4}{\Pi_{q^4}^2}
\end{align*}
belong to $M_2(\Gamma_0(8))$. It is easy to see any three of $\Pi_{q^2}^2$, $\Pi_{q^4}^2$, $\frac{\Pi_q^4}{\Pi_{q^2}^2}$ and $\frac{\Pi_{q^2}^4}{\Pi_{q^4}^2}$ form a basis for $M_2(\Gamma_0(8))$. Therefore, \eqref{LL4-1} follows by comparing the first three terms in the $q$-expansions of both sides.

We remak here that by checking the first three coefficients, we obtain
\begin{align}
8\Pi_{q^2}^2+16\Pi_{q^4}^2+\frac{\Pi_{q^2}^4}{\Pi_{q^4}^2}=\frac{\Pi_q^4}{\Pi_{q^2}^2},
\end{align}
which is essentially \eqref{G-4-1}.

(5) (\textbf{Level 10})  Note that
\begin{align*}
4+24\Big(\sum_{n\geq 1}\frac{q^n}{(1-q^n)^2}-5\sum_{n\geq 1}\frac{q^{5n}}{(1-q^{5n})^2} \Big)=5E_2(5z)-E_2(z) \in M_2(\Gamma_0(5)),
\end{align*}
and
\begin{align}
&24\Big(\sum_{n\geq 1}\frac{q^{2n-1}}{(1-q^{2n-1})^2}-5\sum_{n\geq 1}\frac{q^{10n-5}}{(1-q^{10n-5})^2}\Big) \nonumber \\
=&\left(E_2(2z)-E_2(z) \right)-5\left(E_2(10z)-E_2(5z) \right) \nonumber  \\
=&\left(5E_2(5z)-E_2(z) \right)-\left(5E_2(10z)-E_2(2z) \right) \in M_2(\Gamma_0(10)). \label{level10-proof}
\end{align}
Furthermore, by Proposition \ref{prop} it is easy to see that all the functions
\begin{align*}
\Pi_q^3\Pi_{q^5}, \quad \Pi_q^2\Pi_{q^5}^2, \quad \Pi_q\Pi_{q^5}^3
\end{align*}
belong to $M_4(\Gamma_0(10))$. After multiplying both sides of \eqref{LL5-1} by $\Pi_{q}^2\Pi_{q^5}^2$, we see that both sides belong to $M_6(\Gamma_0(10))$. By Lemma \ref{lem-Sturm}, after verifying that the first 10 coefficients of both sides match, we prove \eqref{LL5-1}.

In the same way, by Lemma \ref{lem-Sturm}, by verifying the first 7 terms, we prove that
\begin{align}\label{LL5-proof}
\Big(\sum_{n\geq 1}\frac{q^{2n-1}}{(1-q^{2n-1})^2}-5\sum_{n\geq 1}\frac{q^{10n-5}}{(1-q^{10n-5})^2}\Big)^2=\Pi_q^3\Pi_{q^5}-2\Pi_{q}^2\Pi_{q^5}^2+5\Pi_q\Pi_{q^5}^3
\end{align}
Dividing both sides of \eqref{LL5-proof} by $\Pi_{q^5}^4$ and taking square roots, by Lemma \ref{lem-square} we obtain \eqref{LL5-2}.

(6) (\textbf{Level 12})  It is easy to see that all the functions
\begin{align*}
\Pi_{q^2}^2, \quad \Pi_{q^2}\Pi_{q^6}, \quad \Pi_q\Pi_{q^3}, \quad \Pi_{q^6}^2
\end{align*}
belong to $M_2(\Gamma_0(12))$. Furthermore, note that
\begin{align*}
&24\Big(\sum_{n\geq 1}\frac{q^{2n-1}}{(1-q^{2n-1})^2}-6\sum_{n\geq 1}\frac{q^{12n-6}}{(1-q^{12n-6})^2} \Big) \\
=&\left(E_2(2z)-E_2(z)\right)-6\left(E_2(12z)-E_2(6z)\right) \\
=&\left(6E_2(6z)-E_2(z)\right)-\left(6E_2(12z)-E_2(2z) \right)
\end{align*}
belongs to $M_2(\Gamma_0(12))$. By Lemma \ref{lem-Sturm}, checking that the first 5 terms match in the $q$-expansions of both sides, we get the two equalities in \eqref{LL6-1}.

(7) (\textbf{Level 18})  By Proposition \ref{prop} we see that both $\frac{\Pi_{q^3}^3}{\Pi_q}$ and $\frac{\Pi_{q^3}^3}{\Pi_{q^9}}$
belong to $M_2(\Gamma_0(18))$ and hence is in $M_2(\Gamma_0(18))$.

Note that
\begin{align}
8+24\Big(\sum_{n\geq 1}\frac{q^{2n}}{(1-q^{2n})^2}-\sum_{n\geq 1}\frac{q^{18n}}{(1-q^{18n})^2} \Big)=9E_2(18z)-E_2(2z)
\end{align}
belongs to $M_2(\Gamma_0(18))$. By Lemma \ref{lem-Sturm}, verifying that the first 7 terms on both sides of \eqref{LL9-1} match, we see that \eqref{LL9-1} holds.

Next, using Proposition \ref{prop}, we can check that all the functions
\begin{align}
&\Pi_q^2\Pi_{q^9}^2\left(\frac{\Pi_q}{\Pi_{q^9}} \right)^{\frac{3}{2}}=\Pi_{q}^{\frac{7}{2}}\Pi_{q^{9}}^{\frac{1}{2}}, \quad
\Pi_q^2\Pi_{q^9}^2\frac{\Pi_q}{\Pi_{q^9}} =\Pi_q^3\Pi_{q^9}, \quad \Pi_q^2\Pi_{q^9}^2\left(\frac{\Pi_q}{\Pi_{q^9}} \right)^{\frac{1}{2}}=\Pi_{q}^{\frac{5}{2}}\Pi_{q^{9}}^{\frac{3}{2}},  \notag \\
&\Pi_q\Pi_{q^9}^3\left(\frac{\Pi_q}{\Pi_{q^9}} \right)^{\frac{3}{2}}=\Pi_q^{\frac{5}{2}}\Pi_{q^9}^{\frac{3}{2}},  \quad \Pi_q\Pi_{q^9}^3\frac{\Pi_q}{\Pi_{q^9}}=\Pi_q^2\Pi_{q^9}^2, \quad
\Pi_q\Pi_{q^9}^3\left(\frac{\Pi_q}{\Pi_{q^9}} \right)^{\frac{1}{2}}=\Pi_q^{\frac{3}{2}}\Pi_{q^9}^{\frac{5}{2}},   \label{LL9-proof-functions} \\
&\Pi_{q^9}^4\left(\frac{\Pi_q}{\Pi_{q^9}} \right)^{\frac{3}{2}}=\Pi_q^{\frac{3}{2}}\Pi_{q^9}^{\frac{5}{2}}, \quad
\Pi_{q^9}^4\frac{\Pi_q}{\Pi_{q^9}}=\Pi_q\Pi_{q^9}^3,   \quad \Pi_{q^9}^4\left(\frac{\Pi_q}{\Pi_{q^9}} \right)^{\frac{1}{2}}=\Pi_q^{\frac{1}{2}}\Pi_{q^9}^{\frac{7}{2}}  \notag
\end{align}
belong to $M_4(\Gamma_0(18))$.

Since
\begin{align*}
&24\Big(\sum_{n\geq 1}\frac{q^{2n-1}}{(1-q^{2n-1})^2}-9\sum_{n\geq 1}\frac{q^{18n-9}}{(1-q^{18n-9})^2}\Big) \\
=&\left(E_2(2z)-E_2(z)\right)-9\left(E_2(18z)-E_2(9z) \right) \\
=&\left(9E_2(9z)-E_2(z) \right)-\left(9E_2(18z)-E_2(2z) \right)
\end{align*}
belongs to $M_2(\Gamma_0(18))$. Taking square on both sides of \eqref{LL9-2} and expanding the right side, we see that the right side is a linear combination of the functions listed in \eqref{LL9-proof-functions}. Hence the squares of both sides of \eqref{LL9-2} belong to $M_4(\Gamma_0(18))$.  Therefore, by Lemma \ref{lem-Sturm}, after checking that the first 13 terms of both sides match, we conclude that the squares of both sides of \eqref{LL9-2} are equal. By Lemma \ref{lem-square} we see that \eqref{LL9-2} holds.

Next, note that
\begin{align}\label{Level18-proof-eq}
24\Big(\sum_{n\geq 1}\frac{q^n}{(1-q^n)^2}-9\sum_{n\geq 1}\frac{q^{9n}}{(1-q^{9n})^2} \Big)+8 =9E_2(9z)-E_2(z)\in M_2(\Gamma_0(9)).
\end{align}
By Proposition \ref{prop} we see that
\begin{align*}
\Pi_{q}^4\in M_4(\Gamma_0(2)), \quad \Pi_{q}^3\Pi_{q^{9}}\in M_4(\Gamma_0(18)), \quad  \Pi_q^2\Pi_{q^9}^2 \in M_4(\Gamma_0(18)).
\end{align*}
After taking square on both sides of \eqref{LL9-3} and multiplying both sides by $\Pi_q^3\Pi_{q^9}$, we see that the resulting functions on both sides belong to $M_8(\Gamma_0(18))$. By Lemma \ref{lem-Sturm}, after checking that the first 25 terms on both sides match, we deduce that the squares of both sides of \eqref{LL9-3} are equal. By Lemma \ref{lem-square} we see that \eqref{LL9-3} holds.

Note that all the functions
\begin{align*}
\Pi_q\Pi_{q^3}, \quad \Pi_{q^3}\Pi_{q^9}, \quad \frac{\Pi_q^3}{\Pi_{q^3}}, \quad \frac{\Pi_{q^3}^3}{\Pi_{q^9}}, \quad \frac{\Pi_{q^3}^3}{\Pi_q}, \quad \frac{\Pi_{q^9}^3}{\Pi_{q^3}}
\end{align*}
belong to $M_2(\Gamma_0(18))$. By Lemma \ref{lem-Sturm}, by checking the first 7 terms in their series expansions, we obtain
\begin{align}
6\Pi_q\Pi_{q^3}+18\Pi_{q^3}\Pi_{q^9}-\frac{\Pi_q^3}{\Pi_{q^3}}+ \frac{\Pi_{q^3}^3}{\Pi_{q^9}}+3\frac{\Pi_{q^3}^3}{\Pi_q}-27\frac{\Pi_{q^9}^3}{\Pi_{q^3}}=0.
\end{align}
Dividing both sides by $\Pi_{q^3}^2$, we obtain \eqref{L18-5}. Next, by \eqref{Level18-proof-eq} and verifying the first 7 terms, we obtain \eqref{La18-4}.

(8) (\textbf{Level 20}) Recall \eqref{level10-proof}. By Proposition \ref{prop} we see that the functions
\begin{align*}
\Pi_q^3\Pi_{q^5}, \quad \Pi_q^2\Pi_{q^5}^2, \quad \Pi_{q^5}\Pi_q^3
\end{align*}
all belong to $M_4(\Gamma_0(10))$, and the functions
\begin{align*}
\Pi_q^2\Pi_{q^5}^6, \quad \Pi_q^2\Pi_{q^5}^2\Pi_{q^{10}}^4
\end{align*}
all belong to $M_8(\Gamma_0(20))$. Therefore, by Lemma \ref{lem-Sturm}, after verifying that the first 25 terms agree, we deduce that
\begin{align*}
&\Big( \sum_{n\geq 1}\frac{q^{2n-1}}{(1-q^{2n-1})^2}-5 \sum_{n\geq 1}\frac{q^{10n-5}}{(1-q^{10n-5})^2}\Big)\left(\Pi_q^3\Pi_{q^5}-4\Pi_q^2\Pi_{q^5}^2-\Pi_q\Pi_{q^5}^3 \right)\Pi_{q^{10}}^2 \nonumber \\
&=\Pi_q^2\Pi_{q^5}^6+16\Pi_q^2\Pi_{q^5}^2\Pi_{q^{10}}^4
\end{align*}
since both sides belong to $M_8(\Gamma_0(20))$. After dividing both sides by $\Pi_{q}^2\Pi_{q^5}^4\Pi_{q^{10}}^2$ and rearrangements, we obtain \eqref{LL20-1}.
\end{proof}

From the proofs given above, we summarize the general strategy for constructing Gosper's Lambert series identities as follows. First, we use the Eisenstein series to generate a Lambert series which belongs to certain space $M_k(\Gamma_0(N))$. Then we use $\Pi_{q}$ to generate a basis or part of a basis of the same space. Finally, we use the first few terms (according to Lemma \ref{lem-Sturm}) in their series expansions to check whether there are linear relations between the Lambert series and the basis elements generated by $\Pi_q$. If so, this will give a Lambert series identity involving $\Pi_q$.


\section{Construction of Gosper's identities using hauptmoduls}\label{sec-hauptmodul}

We may also use another way based on weight 0 modular functions to construct and prove some of Gosper's identities. This method works very well for those identities involving fractional expressions of $\Pi_q$ such as \eqref{G-4-1}.

Recall that for a genus zero congruence subgroup $\Gamma$ of $\mathrm{SL}_2(\mathbb{R})$ commensurable with $\mathbb{SL}_2(\mathbb{Z})$, the function field  of $\Gamma\backslash \mathbb{H}^*$ over $\mathbb{C}$ can be generated by a single modular function, and such function is called a hauptmodul  if it has a unique simple pole of residue 1 at the cusp $\infty$.

The following lemma gives a way to construct a hauptmodul using eta products.
\begin{lemma}\label{lem-hauptmodul}
(Cf.\ \cite{Kondo}) Let $\Gamma$ be a discrete subgroup of $\mathrm{SL}_2(\mathbb{Z})$ containing $\Gamma_0(N)$ for some $N$ such that $\Gamma \backslash \mathbb{H}^*$ is of genus 0. Let $h(z)=\prod\limits_{\delta|N}\eta(\delta z)^{r_\delta}$ be an eta product with $r_\delta\in \mathbb{Z}$. Suppose
\begin{enumerate}[$(1)$]
\item $\sum_{\delta|N} \delta r_\delta=-24$;
\item $h(z)$ is invariant under the action of $\Gamma$;
\item $\Gamma_\infty=\{\alpha\in \Gamma: \alpha(\infty)=\infty\}=\left\{\begin{pmatrix}
\pm 1 & n \\ 0 & \pm 1
\end{pmatrix}: n\in \mathbb{Z} \right\}$, and
\item $z=\infty$ is the unique pole of $h(z)$ among all inequivalent cusps of $\Gamma$.
\end{enumerate}
Then $h(z)$ generates the function field of $\Gamma\backslash \mathbb{H}^*$ over $\mathbb{C}$, i.e., $h(z)$ is a bijection from $\Gamma\backslash \mathbb{H}^*\rightarrow \mathbb{C}\cup \{\infty\}$ and every meromorphic function invariant under $\Gamma$ can be expressed as a rational function of $h(z)$.
\end{lemma}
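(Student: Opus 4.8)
The plan is to identify $X:=\Gamma\backslash\mathbb{H}^*$ with $\mathbb{C}\cup\{\infty\}=\mathbb{P}^1(\mathbb{C})$ by means of the function $h$ itself. Recall that under the hypotheses $X$ is a compact Riemann surface of genus $0$, and its field of meromorphic functions is precisely the field of functions that are meromorphic on $\mathbb{H}$, meromorphic at the cusps, and invariant under $\Gamma$. A meromorphic function on a genus-zero compact Riemann surface generates the entire function field exactly when, viewed as a holomorphic map to $\mathbb{P}^1(\mathbb{C})$, it has \emph{degree one}; equivalently, when its polar divisor consists of a single point with multiplicity one. Hence the whole lemma reduces to showing that $h$, regarded as a function on $X$, has exactly one pole, which is simple and located at the cusp $\infty$. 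Granting that, a degree-one holomorphic map between compact Riemann surfaces is an isomorphism, which yields at once the bijectivity of $h\colon X\to\mathbb{C}\cup\{\infty\}$ and the fact that every $\Gamma$-invariant meromorphic function is a rational function of $h$.

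First I would check that $h$ genuinely defines a meromorphic function on $X$. Since $\eta$ is holomorphic and nowhere vanishing on $\mathbb{H}$, so is each factor $\eta(\delta z)^{r_\delta}$, hence $h$ is holomorphic and nowhere vanishing on $\mathbb{H}$; by hypothesis $(2)$ (which in particular forces the weight $\tfrac{1}{2}\sum_{\delta\mid N}r_\delta$ to vanish) it descends to a holomorphic, nowhere-vanishing function on $\Gamma\backslash\mathbb{H}$. Therefore the divisor of $h$ on $X$ is supported on the cusps. Meromorphy at each cusp follows from the classical transformation law of $\eta$ under $\mathrm{SL}_2(\mathbb{Z})$ together with its multiplier system: for any $\gamma\in\mathrm{SL}_2(\mathbb{Z})$ the function $h\circ\gamma$ possesses a $q$-power expansion at $\infty$, and the $\Gamma$-invariance of $h$ forces these expansions to be in the appropriate local parameters at the $\Gamma$-cusps, so $h\in\mathbb{C}(X)$.

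Next I would read off the local behavior at $\infty$. By hypothesis $(3)$ the cusp $\infty$ has width $1$ in $\Gamma$, so $q=e^{2\pi i z}$ is a local uniformizer there. From $\eta(\delta z)=q^{\delta/24}\prod_{n\ge 1}(1-q^{\delta n})$ one gets
\[
h(z)=q^{\frac{1}{24}\sum_{\delta\mid N}\delta r_\delta}\prod_{\delta\mid N}\prod_{n\ge 1}(1-q^{\delta n})^{r_\delta}=q^{-1}\bigl(1+O(q)\bigr),
\]
the exponent being $-1$ by hypothesis $(1)$ and the leading coefficient being $1$. Thus $h$ has a simple pole at $\infty$ with principal part $q^{-1}$, i.e.\ ``residue'' $1$ in the sense of the definition. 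By hypothesis $(4)$, no other cusp is a pole of $h$, and by the previous paragraph $h$ has no poles on $\Gamma\backslash\mathbb{H}$; hence the polar divisor of $h$ on $X$ is exactly $(\infty)$. This makes $h\colon X\to\mathbb{P}^1(\mathbb{C})$ a map of degree one (re-confirming $g(X)=0$), and the argument is completed as in the first paragraph.

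The only step that is not purely formal is the meromorphy-at-cusps assertion: one must make sure that the formal $q$-expansion of $h$ at a cusp of the possibly larger group $\Gamma\supseteq\Gamma_0(N)$ involves only integral powers of the correct local uniformizer — so that the order $-1$ read off at $\infty$ from hypothesis $(1)$ and the absence of poles at the other cusps from hypothesis $(4)$ together compute the divisor of $h$ on $X$. This is where the precise shape of the $\eta$-multiplier system and the matching of cusp widths are used; all of it is classical (cf.\ \cite{Kondo}, and \cite{Ono-web} for the $\Gamma_0(N)$ order formula already recalled in Lemmas \ref{lem-modular} and \ref{lem-ord}). The remaining ingredients — the genus-zero Riemann surface structure on $\Gamma\backslash\mathbb{H}^*$, the identification of its function field with the $\Gamma$-invariant meromorphic functions, and ``degree one $\Rightarrow$ isomorphism'' — are standard facts of Riemann surface theory (cf.\ \cite{Diamond-Shurman}).
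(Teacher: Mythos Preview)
The paper does not actually prove this lemma: it is stated with ``(Cf.\ \cite{Kondo})'' and used as a black box, so there is no in-paper proof to compare against. Your argument is the standard one and is correct: since $\eta$ is holomorphic and nonvanishing on $\mathbb{H}$, the divisor of $h$ on $X=\Gamma\backslash\mathbb{H}^*$ is supported on the cusps; condition $(1)$ together with the width-$1$ assumption $(3)$ gives a simple pole at $\infty$, condition $(4)$ rules out any other poles, and a degree-one map from a compact Riemann surface to $\mathbb{P}^1$ is an isomorphism, so $\mathbb{C}(X)=\mathbb{C}(h)$. Your caveat about verifying that the $q$-expansions at the other $\Gamma$-cusps lie in the correct local parameter (so that the orders computed via Lemma \ref{lem-ord} on $\Gamma_0(N)$ really give the orders on $X$) is exactly the right technical point to flag, and is handled in the cited reference.
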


We will restrict our attention on $\Gamma=\Gamma_0(N)$. Let $M_k^{!}(\Gamma_0(N))$ be the space of weakly holomorphic modular forms on $\Gamma_0(N)$ of even integer weight $k$, i.e., meromorphic modular forms of weight $k$ on $\Gamma_0(N)$ which can only have poles at cusps. We will focus on $M_0^{!}(\Gamma_0(N))$, which is a subset of the function field  of $\Gamma_0(N)\backslash \mathbb{H}^*$.

It is known \cite[Theorem 15, p.\ 103]{Schoen} that $\Gamma_0(N)$ has genus zero for $N\leq 10$ and $N\in \{12,13,16,18,25\}$. For each genus zero group $\Gamma_0(N)$, we know hauptmoduls in terms of eta products exist. Let $h(z)$ be a hauptmodul for $\Gamma_0(N)$. Given a function $F(z)$ in $M_0^{!}(\Gamma_0(N))$, we know that $F(z)$ can be expressed as a rational function of $h(z)$. This will produce an identity. In particular, if we are able to use $\Pi_q$ to construct the functions $h(z)$ and $F(z)$, then we will get a $\Pi_q$-identity.

Now we will use this strategy to reprove some of Gosper's identities in genus zero levels 8, 12, 16 and 18, which will suffice for illustrating this method. Note that the hauptmodul $h(z)$ and auxiliary function $F(z)$ are not the same for different levels.

\subsection{Level 8}
By Proposition \ref{prop} and Lemma \ref{lem-hauptmodul}, we see that the function
\begin{align*}
h(z)=\frac{\Pi_{q^2}^2}{\Pi_{q^4}^2}=\frac{\eta^{12}(4z)}{\eta^4(2z)\eta^8(8z)}=q^{-1}+4q+2q^3+O(q^4)
\end{align*}
is a hauptmodul for $\Gamma_0(8)$. It is not difficult to find the values of $h(z)$ at cusps other than $\infty$ of $\Gamma_0(8)$:
\begin{align}
h(0)=4, \quad h(\frac{1}{2})=-4, \quad h(\frac{1}{4})=0.
\end{align}

Consider
\begin{align*}
F_1(z):=\frac{\Pi_q^2}{\Pi_{q^2}\Pi_{q^4}},
\end{align*}
which belongs to $M_0^{!}(\Gamma_0(8))$. Comparing the orders of $F_1(z)$ and $h(z)$ at the cusps (see Table \ref{Tab-level8}), we see that $F_1(z)$ and $h(z)-h(\frac{1}{2})$ have the same zeros and poles. Hence there exists some constant $c_1$ such that
$$F_1(z)=c_1\Big(h(z)-h(\frac{1}{2})\Big)=c_1(h(z)+4).$$
Note that $F_1(z)=q^{-1}(1+O(q))$. We deduce that $c_1=1$, and this proves \eqref{G-4-1}.
\begin{table}[htbp]
\renewcommand\arraystretch{1.5}
\begin{tabular}{c|cccc}
  \hline
  cusp $p$ & $\infty$ & $0$ & $\frac{1}{2}$ & $\frac{1}{4}$ \\
  \hline
  $\mathrm{ord}(h,p)$ & $-1$ & 0 & 0 & 1\\
  \hline
   $\mathrm{ord}(F_1,p)$ & $-1$ & 0 & 1 & 0  \\
   \hline
\end{tabular}
\vspace{1mm}
\caption{Orders of functions at cusps of $\Gamma_0(8)$}\label{Tab-level8}
\end{table}

\subsection{Level 12}
By Lemma \ref{lem-hauptmodul} we know that the function
\begin{align*}
h(z):=\frac{\Pi_{q^2}}{\Pi_{q^6}}=\frac{\eta^4(4z)\eta^2(6z)}{\eta^2(2z)\eta^4(12z)}=q^{-1}+2q+q^3+O(q^3)
\end{align*}
is a hauptmodul for $\Gamma_0(12)$. It is not difficult to show that
\begin{align}
h(0)=3, \quad h(\frac{1}{2})=-3, \quad h(\frac{1}{4})=0, \quad h(\frac{1}{3})=-1, \quad h(\frac{1}{6})=1.
\end{align}

Consider the functions
\begin{align*}
&F_1(z):=\frac{\Pi_{q}\Pi_{q^3}}{\Pi_{q^6}^2}, \quad F_2(z):=\frac{\Pi_{q^3}^2}{\Pi_q^2}, \quad F_3(z):=\frac{\Pi_{q^3}^4}{\Pi_{q^6}^4}, \\ &F_4(z):=\frac{\Pi_q^4}{\Pi_{q^6}^4}, \quad F_5(z):=\frac{\Pi_{q^3}^3}{\Pi_q\Pi_{q^6}^2}, \quad F_6(z):=\frac{\Pi_{q}^3}{\Pi_{q^3}\Pi_{q^6}^2}.
\end{align*}

They all belong to $M_0^{!}(\Gamma_0(12))$ and we have
\begin{align}\label{F-relation}
F_3(z)=F_1^2(z)F_2(z), \quad F_4(z)=\frac{F_1^2(z)}{F_2(z)}, \quad F_5(z)=F_1(z)F_2(z), \quad F_6(z)=\frac{F_1(z)}{F_2(z)}.
\end{align}

\begin{table}[htbp]
\renewcommand\arraystretch{1.5}
\begin{tabular}{c|cccccc}
  \hline
  cusp $p$ & $\infty$ & $0$ & $\frac{1}{2}$ & $\frac{1}{3}$ & $\frac{1}{4}$ & $\frac{1}{6}$ \\
  \hline
  $\mathrm{ord}(h,p)$ & $-1$ & 0 & 0 & 0 & 1 & 0 \\
  \hline
   $\mathrm{ord}(F_1,p)$ & $-2$ & 0 & 1 & 0 & 0 & 1 \\
   \hline
  $\mathrm{ord}(F_2,p)$ & 1 & 0 & $-1$ & 0 & $-1$ & 1 \\
  \hline
\end{tabular}
\vspace{1mm}
\caption{Orders of functions at cusps of $\Gamma_0(12)$}\label{tab-order}
\end{table}

Comparing the orders of $F_i(z)$ ($i=1,2$) and $h(z)$ at the cusps (see Table \ref{tab-order}),  we deduce that
\begin{align}\label{F3}
F_1(z)&=\Big(h(z)-h(\frac{1}{2}) \Big)\Big(h(z)-h(\frac{1}{6}) \Big)=\left(h(z)-1 \right)\left(h(z)+3 \right), \\
F_2(z)&=\frac{h(z)-h(\frac{1}{6})}{(h(z)-h(\frac{1}{2}))(h(z)-h(\frac{1}{4}))}=\frac{h(z)-1}{h(z)(h(z)+3)}.
\end{align}
This proves \eqref{G-6-1} and \eqref{G-6-2}. Now using \eqref{F-relation} we obtain
\begin{align*}
F_3(z)&=\frac{(h(z)+3)(h(z)-1)^3}{h(z)}, \quad
F_4(z)=h(z)(h(z)+3)^3(h(z)-1),\\
F_5(z)&=\frac{(h(z)-1)^2}{h(z)}, \quad F_6(z)=h(z)(h(z)+3)^2.
\end{align*}
This proves  \eqref{G-6-4}, \eqref{G-6-5}, \eqref{L6-13} and \eqref{L6-14}, respectively.

\subsection{Level 16}

By Lemma \ref{lem-hauptmodul} we know that the function
\begin{align*}
h(z):=\frac{\Pi_{q^4}}{\Pi_{q^8}}=\frac{\eta^6(8z)}{\eta^2(4z)\eta^4(16z)}
\end{align*}
is a hauptmodul for $\Gamma_0(16)$. We find the values of $h$ at cusps other than $\infty$ of $\Gamma_0(16)$:
\begin{align*}
h(0)=2, \quad h(\frac{1}{2})=-2, \quad h(\frac{1}{4})=-2i, \quad h(\frac{3}{4})=2i, \quad h(\frac{1}{8})=0.
\end{align*}

Consider the functions
\begin{align}
F_1(z):=\frac{\Pi_q^2}{\Pi_{q^2}\Pi_{q^8}}, \quad F_2(z):=\frac{\Pi_q^4}{\Pi_{q^2}^4}, \quad F_3(z):=\frac{\Pi_{q^2}^2}{\Pi_{q^4}^2},
\end{align}
which all belong to $M_0^{!}(\Gamma_0(16))$.

Comparing the orders of $F_1(z), F_2(z)$, $F_3(z)$ and $h(z)$ at the cusps (see Table \ref{tab-L16-order}), we conclude that
\begin{align}
F_1(z)&=\Big(h(z)-h(\frac{1}{2})\Big)^2=\Big(h(z)+2\Big)^2, \\
F_2(z)&=\frac{\Big(h(z)-h(\frac{1}{2})\Big)^4}{\Big(h(z)-h(\frac{1}{4})\Big)\Big( h(z)-h(\frac{3}{4}) \Big)\Big(h(z)-h(\frac{1}{8}) \Big)}=\frac{(h(z)+2)^4}{h(z)(h(z)^2+4)}, \\
F_3(z)&=\frac{\Big(h(z)-h(\frac{1}{4})\Big)\Big(h(z)-h(\frac{3}{4})\Big)}{h(z)-h(\frac{1}{8})}=\frac{h(z)^2+4}{h(z)}.
\end{align}
This proves \eqref{L8-1}, \eqref{L8-2} and \eqref{G-4-1} (with $q$ replaced by $q^2$), respectively.

\begin{table}[htbp]
\renewcommand\arraystretch{1.5}
\begin{tabular}{c|cccccc}
  \hline
  cusp $p$ & $\infty$ & $0$ & $\frac{1}{2}$ & $\frac{1}{4}$ & $\frac{3}{4}$ & $\frac{1}{8}$  \\
  \hline
  $\mathrm{ord}(h,p)$ & $-1$ & 0 & 0 & 0 & 0 & 1 \\
  \hline
   $\mathrm{ord}(F_1,p)$ & $-2$ & 0 & 2 & 0 & 0 & 0 \\
   \hline
  $\mathrm{ord}(F_2,p)$ & $-1$ & 0 & $4$ & $-1$ & $-1$ & $-1$ \\
  \hline
  $\mathrm{ord}(F_3,p)$ & $-1$ & 0 & 0 & 1 & 1 & $-1$ \\
  \hline
\end{tabular}
\vspace{1mm}
\caption{Orders of functions at cusps of $\Gamma_0(16)$}\label{tab-L16-order}
\end{table}

\subsection{Level 18}
By Lemma \ref{lem-hauptmodul} we know that
\begin{align}
h(z):=\sqrt{\frac{\Pi_{q}}{\Pi_{q^9}}}=\frac{\eta^2(2z)\eta(9z)}{\eta(z)\eta^2(18z)}=q^{-1}+1+q^2+O(q^3)
\end{align}
is a hauptmodul for $\Gamma_0(18)$. We find that
\begin{align}
h(0)=3, \quad h(\frac{1}{2})=0, \quad h(\frac{1}{3})=-\sqrt{3}i, \quad h(\frac{2}{3})=\sqrt{3}i, \nonumber \\ h(\frac{1}{6})=\frac{3}{2}-\frac{\sqrt{3}}{2}i, \quad h(\frac{5}{6})=\frac{3}{2}+\frac{\sqrt{3}}{2}i, \quad h(\frac{1}{9})=1.
\end{align}

Let
\begin{align}
F_1(z):=\frac{\Pi_{q^3}^2}{\Pi_{q^9}^2},
\end{align}
which belongs to $M_0^{!}(\Gamma_0(18))$. Comparing the orders of $F_1(z)$ and $h(z)$ at cusps (see Table \ref{tab-order-L18}), we conclude that
\begin{align}
F_1(z)=h(z)\Big(h(z)-h(\frac{1}{6})\Big)\Big(h(z)-h(\frac{5}{6})\Big)=h(z)\left(h^2(z)-3h(z)+3\right).
\end{align}
This proves \eqref{G-9-1} and \eqref{L9-2} simultaneously.
\begin{table}[htbp]
\renewcommand\arraystretch{1.5}
\begin{tabular}{c|cccccccc}
  \hline
  cusp $p$ & $\infty$ & $0$ & $\frac{1}{2}$ & $\frac{1}{3}$ & $\frac{2}{3}$ & $\frac{1}{6}$ & $\frac{5}{6}$ & $\frac{1}{9}$  \\
  \hline
  $\mathrm{ord}(h,p)$ & $-1$ & 0 & 1 & 0 & 0 & 0 & 0 & 0 \\
  \hline
   $\mathrm{ord}(F_1,p)$ & $-3$ & 0 & 1 & 0 & 0 & 1 &1  & 0  \\
  \hline
\end{tabular}
\vspace{2mm}
\caption{Orders of functions at cusps of $\Gamma_0(18)$}\label{tab-order-L18}
\end{table}

We take this chance to point out the following fact. Though there are numerous identities for the same index set $\{n_1,\cdots,n_m\}$, some of them are in fact equivalent. For example, the five level 18 identities for $\Pi_q,\Pi_{q^3},\Pi_{q^9}$ are equivalent to each other, which means that we can prove any other four from one of them. Below we show how \eqref{L9-2}, \eqref{L9-3}, \eqref{L9-4} and \eqref{L18-5} can be obtained from \eqref{G-9-1}.
\begin{proof}[Proof of level 18 identities via \eqref{G-9-1}]
We denote $a=\sqrt{\Pi_q}$ and $b=\sqrt{\Pi_{q^9}}$. From \eqref{G-9-1} we have $\Pi_{q^3}^2=ab(a^2+3b^2-3ab)$. Thus
\begin{align}\label{Pi39-1}
\Pi_{q^3}^2-\Pi_{q^9}^2&=ab(a^2+3b^2-3ab)-b^4=b(a^3-3a^2b+3ab^2-b^3) \nonumber \\
&=b(a-b)^3=\sqrt{\Pi_{q^9}}\left(\sqrt{\Pi_q}-\sqrt{\Pi_{q^3}} \right)^3.
\end{align}
This proves \eqref{L9-2}. Next,
\begin{align}\label{Pi39-2}
\Pi_{q}^2-\Pi_{q^3}^2=a^4-ab(a^2+3b^2-3ab)=a(a-b)(a^2+3b^2).
\end{align}
Now by \eqref{Pi39-1} and \eqref{Pi39-2}, we see that both sides of \eqref{L9-3} are equal to $b^4a^3(a-b)^3(a^2+3b^2)^3$. This proves \eqref{L9-3}.
Substituting \eqref{Pi39-1} and \eqref{Pi39-2} into \eqref{L9-4}, we see that both sides of \eqref{L9-4} are equal to $a^6b^2(a-b)^6(a^2+3b^2)^6$, and hence \eqref{L9-4} holds. Similarly, multiplying both sides of \eqref{L18-5} by $\Pi_q\Pi_{q^3}^3\Pi_{q^9}$ and making use of $\Pi_{q^3}^2=ab(a^2+3b^2-3ab)$, we prove \eqref{L18-5}.
\end{proof}

Finally, we remark here that some of  Gosper's Lambert series identities can also be proved using hauptmoduls. For example,
by Lemma \ref{lem-hauptmodul} we know that the function
\begin{align}
h(z):=\frac{\Pi_{q}}{\Pi_{q^5}}=\frac{\eta^4(2z)\eta^2(5z)}{\eta^2(z)\eta^4(10z)}=q^{-1}+2+q+2q^2+O(q^3)
\end{align}
is a hauptmodul for $\Gamma_0(10)$. Consider
\begin{align}
F_1(z):=\frac{6\left(5E_2(5z)-E_2(z) \right)}{5E_2(5z)-E_2(z)-\left(5E_2(10z)-E_2(2z)\right)}.
\end{align}
Then $F_1(z)$ is clearly a modular function invariant on $\Gamma_0(10)$, and hence can be expressed as a rational function of $h(z)$. By computation we find that
\begin{align}
F_1(z)=\frac{h(z)^2+2h(z)+5}{h(z)},
\end{align}
which is \eqref{LL5-1}. But in this case, the function $F_1(z)$ has zeros in $\mathbb{H}$. And in more general cases, the auxiliary functions constructed using Lambert series like $F_1(z)$ may also have poles in $\mathbb{H}$. Analyzing the orders of  zeros and poles for these auxiliary functions will be more complicated than for eta products. Thus for proving Gosper's Lambert series identities, this method does not show advantage compared with the method in Section \ref{sec-Lambert}. Therefore, we do not pursue it here.

\subsection*{Acknowledgements}
This work was supported by the National Natural Science Foundation of China (11801424) and a start-up research grant of the Wuhan University.

\end{document}